\providecommand{\U}[1]{\protect\rule{.1in}{.1in}}
\providecommand{\U}[1]{\protect\rule{.1in}{.1in}}
\providecommand{\U}[1]{\protect\rule{.1in}{.1in}}
\newtheorem{theorem}{Theorem}[section]
\newtheorem{corollary}[theorem]{Corollary}
\newtheorem{lemma}[theorem]{Lemma}
\theoremstyle{definition}
\newtheorem{remark}[theorem]{Remark}
\newtheorem{definition}[theorem]{Definition}
\begin{document}
\title[Ideals of polynomials between Banach spaces revisited]{Ideals of
polynomials between Banach spaces revisited}
\author[T. Velanga]{T. Velanga}
\address{IMECC \\
UNICAMP-Universidade Estadual de Campinas \\
13.083-859 - S\~{a}o Paulo, Brazil.}
\address{Departamento de Matem\'{a}tica \\
Universidade Federal de Rond\^onia \\
76.801-059 - Porto Velho, Brazil.}
\email{thiagovelanga@unir.br\\
ra115476@ime.unicamp.br}
\keywords{polynomials; Banach spaces; Bohnenblust--Hille inequalities;
ideals of polynomials}
\thanks{2010 Mathematics Subject Classification: Primary 46G25, 47L22, 47H60}
\thanks{T. Velanga was supported by FAPERO and CAPES}

\begin{abstract}
Ideals of polynomials and multilinear operators between Banach spaces have
been exhaustively investigated in the last decades. In this paper, we
introduce a unified (and more general) approach and propose some lines of
investigation in this new framework. Among other results, we prove a
Bohnenblust--Hille inequality in this more general setting.
\end{abstract}

\maketitle
\tableofcontents


\begin{center}
\textit{To the memory of Professor Jorge Mujica}
\end{center}

\section{Introduction}

Linear Functional Analysis emerged in the 30's after the publication of
Banach's monograph. The investigation of polynomials and multilinear
operators between normed spaces is, of course, the first natural step when
moving from linear to nonlinear Functional Analysis. The theory of
polynomials between normed spaces is a basic tool for the investigation of
holomorphic mappings in Banach spaces. We recall that if $E,F$ are normed
spaces, a map $P:E\rightarrow F$ is called an $m$-homogeneous polynomial
when there is an $m$-linear operator%
\begin{equation*}
A:E\times \cdots \times E\rightarrow F
\end{equation*}%
such that%
\begin{equation*}
P(x)=A(x,\ldots ,x)
\end{equation*}%
for all $x$ in $E.$ Continuity is defined as usual when dealing with metric
spaces, and it is well known that $P$ is continuous if and only if%
\begin{equation*}
\left\Vert P\right\Vert :=\sup_{\left\Vert x\right\Vert \leq 1}\left\Vert
P(x)\right\Vert <\infty .
\end{equation*}%
The basics of the theory of polynomials and multilinear operators between
Banach spaces can be found in the classical books \cite{Dineen, Muj}.
Polynomials and multilinear operators have been exhaustively investigated by
quite different viewpoints. While polynomials are suitable to investigation
of the holomorphic mappings, multilinear operators are commonly explored in
the context of the extension of the operators ideals theory to the nonlinear
setting. The notion of ideals of polynomials between Banach spaces is due to
Pietsch \cite{pi}. The natural extension to multilinear operators and
polynomials was designed by Pietsch some years later in \cite{pi2}.
Nowadays, ideals of polynomials and multilinear operators are explored by
several authors in different directions (see, for instance, \cite{ach, be,
bo01, bo2, bo4, bo3, bo9, bo10, bo8, bo6, fg}). In this paper we are mainly
interested in the theory of ideals of polynomials and ideals of multilinear
operators between Banach spaces. We propose an unified approach to the
subject and some themes for future research.

\section{Ideals of polynomials and multilinear operators: the classic
definitions}

We first recall the classical definition of operator ideals.

\begin{definition}[Operator ideal \protect\cite{pi}]
An \textit{operator ideal }is a class $\mathcal{I}$ of continuous linear
operators between Banach spaces such that for all Banach space $E$ and $F$,
its components%
\begin{equation*}
\mathcal{I}\left( E;F\right) :=\mathcal{L}\left( E;F\right) \cap \mathcal{I}
\end{equation*}%
satisfy:

\begin{description}
\item[(Oa)] $\mathcal{I}\left( E;F\right) $ is a linear subspace of $%
\mathcal{L}\left( E;F\right) $ which contains the finite rank operators;

\item[(Ob)] the ideal property: if $u\in \mathcal{I}\left( E;F\right) ,~v\in 
\mathcal{L}\left( G;E\right) $ and $t\in \mathcal{L}\left( F;H\right) $, then%
\begin{equation*}
t\circ u\circ v\in \mathcal{I}\left( G;H\right) .
\end{equation*}
\end{description}

\noindent Moreover, $\mathcal{I}$ is said to be a \textit{(quasi-) normed
operator ideal} if there exists a map \linebreak $\left\Vert \cdot
\right\Vert _{\mathcal{I}}:\mathcal{I}\rightarrow \lbrack 0,\infty )$
satisfying:

\begin{description}
\item[(O1)] $\left\Vert \cdot \right\Vert _{\mathcal{I}}$ restricted to $%
\mathcal{I}\left( E;F\right) $ is a (quasi-) norm, for all Banach spaces $E$
and $F$;

\item[(O2)] $\left\Vert id_{\mathbb{K}}:\mathbb{K}\rightarrow \mathbb{K}:id_{%
\mathbb{K}}\left( \lambda \right) =\lambda \right\Vert _{\mathcal{I}}=1$;

\item[(O3)] if if $u\in \mathcal{I}\left( E;F\right) ,~v\in \mathcal{L}%
\left( G;E\right) $ and $t\in \mathcal{L}\left( F;H\right) $, then%
\begin{equation*}
\left\Vert t\circ u\circ v\right\Vert _{\mathcal{I}}\leq \left\Vert
t\right\Vert \left\Vert u\right\Vert _{\mathcal{I}}\left\Vert v\right\Vert 
\text{.}
\end{equation*}
\end{description}

\noindent When all the components $\mathcal{I}\left( E;F\right) $ are
complete under the (quasi-) norm $\left\Vert \cdot \right\Vert _{\mathcal{I}%
} $ above, then $\mathcal{I}$ is called a \textit{(quasi-) Banach operator
ideal.}
\end{definition}

For the multilinear operators we have the following concepts:

\begin{definition}
Let $E_{1},\ldots ,E_{m},F$ be normed spaces. A multilinear mapping $T\in $ $%
\mathcal{L}\left( E_{1},\ldots ,E_{m};F\right) $ is said to be of\ \textit{%
finite type} if there exist $k\in \mathbb{N},~\varphi _{i}^{\left( j\right)
}\in E_{j}^{\prime }$ and $b_{i}\in F$ for $i=1,\ldots ,k$ and $j=1,\ldots
,m $, such that%
\begin{equation*}
T\left( x_{1},\ldots ,x_{m}\right) =\overset{k}{\underset{i=1}{\sum }}%
\varphi _{i}^{\left( 1\right) }\left( x_{1}\right) \cdots \varphi
_{i}^{\left( m\right) }\left( x_{m}\right) b_{i}\text{.}
\end{equation*}%
The subspace of all finite-type members of $\mathcal{L}\left( E_{1},\ldots
,E_{m};F\right) $ is denoted by $\mathcal{L}_{f}\left( E_{1},\ldots
,E_{m};F\right) $.
\end{definition}

\begin{definition}[Ideal of multilinear mappings \protect\cite{fg}]
\label{multilinidealdef}For each positive integer $m$, let $\mathcal{L}_{m}$
denote the class of all continuous $m$-linear operators between Banach
spaces. An \textit{ideal of multilinear mappings} $\mathcal{M}$ is a
subclass of the class $\mathcal{L}=\underset{m=1}{\overset{\infty }{\bigcup }%
}\mathcal{L}_{m}$ of all continuous multilinear operators between Banach
spaces such that for a positive integer $m$, Banach spaces $E_{1},\ldots
,E_{m}$ and $F$, the components%
\begin{equation*}
\mathcal{M}_{m}\left( E_{1},\ldots ,E_{m};F\right) :=\mathcal{L}_{m}\left(
E_{1},\ldots ,E_{m};F\right) \cap \mathcal{M}
\end{equation*}%
satisfy:

\begin{description}
\item[(Ma)] $\mathcal{M}_{m}\left( E_{1},\ldots,E_{m};F\right) $ is a linear
subspace of $\mathcal{L}_{m}\left( E_{1},\ldots,E_{m};F\right) $ which
contains the $m$-linear mappings of finite type;

\item[(Mb)] the ideal property: if $T\in \mathcal{M}_{m}\left( E_{1},\ldots
,E_{m};F\right) ,~u_{j}\in \mathcal{L}_{1}\left( G_{j};E_{j}\right) $ for $%
j=1,\ldots ,m$, and $t\in \mathcal{L}_{1}\left( F;H\right) $, then%
\begin{equation*}
t\circ T\circ \left( u_{1},\ldots ,u_{m}\right) \in \mathcal{M}_{m}\left(
G_{1},\ldots ,G_{m};H\right) .
\end{equation*}
\end{description}

\noindent Moreover, $\mathcal{M}$ is said to be a \textit{(quasi-) normed
ideal of multilinear mappings} if there exists a map $\left\Vert \cdot
\right\Vert _{\mathcal{M}}:\mathcal{M}\rightarrow \lbrack 0,\infty )$
satisfying:

\begin{description}
\item[(M1)] $\left\Vert \cdot \right\Vert _{\mathcal{M}}$ restricted to $%
\mathcal{M}_{m}\left( E_{1},\ldots ,E_{m};F\right) $ is a (quasi-) norm, for
all $m\in \mathbb{N}$ and Banach spaces $E_{1},\ldots ,E_{m}$ and $F$;

\item[(M2)] $\left\Vert id_{m}:\mathbb{K}^{m}\rightarrow \mathbb{K}%
:id_{m}\left( \lambda _{1},\ldots ,\lambda _{m}\right) =\lambda _{1}\cdots
\lambda _{m}\right\Vert _{\mathcal{M}}=1$, for all $m\in \mathbb{N}$;

\item[(M3)] If $T\in\mathcal{M}_{m}\left( E_{1},\ldots,E_{m};F\right)
,~u_{j}\in\mathcal{L}_{1}\left( G_{j};E_{j}\right) $ for $j=1,\ldots,m$, and 
$t\in\mathcal{L}_{1}\left( F;H\right) $, then%
\begin{equation*}
\left\Vert t\circ T\circ\left( u_{1},\ldots,u_{m}\right) \right\Vert _{%
\mathcal{M}}\leq\left\Vert t\right\Vert \left\Vert T\right\Vert _{\mathcal{M}%
}\left\Vert u_{1}\right\Vert \cdots\left\Vert u_{m}\right\Vert \text{.}
\end{equation*}
\end{description}

\noindent When all the components $\mathcal{M}_{m}\left( E_{1},\ldots
,E_{m};F\right) $ are complete under the (quasi-) norm $\left\Vert \cdot
\right\Vert _{\mathcal{M}}$ above, $\mathcal{M}$ is said to be a \textit{%
(quasi-) Banach ideal of multilinear mappings. }For a fixed ideal of
multilinear mappings $\mathcal{M}$ and a positive integer $m\in \mathbb{N}$,
the class%
\begin{equation*}
\mathcal{M}_{m}:=\underset{E_{1},\ldots ,E_{m},F}{\bigcup }\mathcal{M}%
_{m}\left( E_{1},\ldots ,E_{m};F\right)
\end{equation*}%
is called an \textit{ideal of }$m$\textit{-linear mappings}.
\end{definition}

For the homogeneous polynomials we have the following concepts:

\begin{definition}
Let $E,F$ be normed spaces. A polynomial $P\in $ $\mathcal{P}\left(
^{m}E;F\right) $ is said to be of\ \textit{finite type} if there exists $%
k\in \mathbb{N},~\varphi _{i}\in E^{\prime }$ and $b_{i}\in F$ for $%
i=1,\ldots ,k$, such that%
\begin{equation*}
P\left( x\right) =\overset{k}{\underset{i=1}{\sum }}\varphi _{i}\left(
x\right) ^{m}b_{i}\text{.}
\end{equation*}%
The subspace of all finite-type members of $\mathcal{P}\left( ^{m}E;F\right) 
$ is denoted by $\mathcal{P}_{f}\left( ^{m}E;F\right) $.
\end{definition}

\begin{definition}[Polynomial ideal \protect\cite{pi2}]
\label{polyidealdef}For each positive integers $m$, let $\mathcal{P}_{m}$
denote the class of all continuous $m$-homogeneous polynomials between
Banach spaces. A \textit{polynomial ideal} $\mathcal{Q}$ (or \textit{ideal
of homogeneous polynomials}) is a subclass of the class $\mathcal{P}=%
\underset{m=1}{\overset{\infty }{\bigcup }}\mathcal{P}_{m}$ of all
continuous homogeneous polynomials between Banach spaces such that for all $%
m\in \mathbb{N}$ and all Banach spaces $E$ and $F$, the components%
\begin{equation*}
\mathcal{Q}_{m}\left( ^{m}E;F\right) :=\mathcal{P}_{m}\left( ^{m}E;F\right)
\cap \mathcal{Q}
\end{equation*}%
satisfy:

\begin{description}
\item[(Pa)] $\mathcal{Q}_{m}\left( ^{m}E;F\right) $ is a linear subspace of $%
\mathcal{P}_{m}\left( ^{m}E;F\right) $ which contains the finity-type $m$%
-homogeneous polynomials;

\item[(Pb)] the ideal property: if $u\in\mathcal{L}_{1}\left( G;E\right) $ $%
P\in\mathcal{Q}_{m}\left( ^{m}E;F\right) $ and $t\in\mathcal{L}_{1}\left(
F;H\right) $, then%
\begin{equation*}
t\circ P\circ u\in\mathcal{Q}_{m}\left( ^{m}G;H\right) .
\end{equation*}
\end{description}

\noindent Moreover, $\mathcal{Q}$ is said to be a \textit{(quasi-) normed
polynomial ideal} if there exists a \linebreak map $\left\Vert \cdot
\right\Vert _{\mathcal{Q}}:\mathcal{Q}\rightarrow \lbrack 0,\infty )$
satisfying:

\begin{description}
\item[(P1)] $\left\Vert \cdot\right\Vert _{\mathcal{Q}}$ restricted to $%
\mathcal{Q}_{m}\left( ^{m}E;F\right) $ is a (quasi-) norm, for all $m\in%
\mathbb{N}$ and all Banach spaces $E$ and $F$;

\item[(P2)] $\left\Vert id_{m}:\mathbb{K}\rightarrow \mathbb{K}:id_{m}\left(
\lambda \right) =\lambda ^{m}\right\Vert _{\mathcal{Q}}=1$, for all $m\in 
\mathbb{N}$;

\item[(P3)] If $u\in\mathcal{L}_{1}\left( G;E\right) ,~P\in\mathcal{Q}%
_{m}\left( ^{m}E;F\right) $ and $t\in\mathcal{L}_{1}\left( F;H\right) $, then%
\begin{equation*}
\left\Vert t\circ P\circ u\right\Vert _{\mathcal{Q}}\leq\left\Vert
t\right\Vert \left\Vert P\right\Vert _{\mathcal{Q}}\left\Vert u\right\Vert
^{m}\text{.}
\end{equation*}
\end{description}

\noindent When all the components $\mathcal{Q}_{m}\left( ^{m}E;F\right) $
are complete under the (quasi-) norm $\left\Vert \cdot \right\Vert _{%
\mathcal{Q}}$ above, then $\mathcal{Q}$ is called a \textit{(quasi-) Banach
polynomial ideal. }For a fixed polynomial ideal $\mathcal{Q}$ and a positive
integer $m\in \mathbb{N}$, the class%
\begin{equation*}
\mathcal{Q}_{m}:=\underset{E,F}{\bigcup }\mathcal{Q}_{m}\left( ^{m}E;F\right)
\end{equation*}%
is called an \textit{ideal of }$m$\textit{-homogeneous polynomials}.
\end{definition}

\section{Basic results}

A fact apparently overloked in the literature is that every $m$-linear
operator is in fact a polynomial (we thank Prof. Pilar Rueda and R. Aron for
important conversations about it). More precisely, if 
\begin{equation*}
T:E_{1}\times \cdots \times E_{m}\rightarrow F
\end{equation*}%
is an $m$-linear operator then, denoting $E:=E_{1}\times \cdots \times
E_{m}, $ the map%
\begin{eqnarray*}
P &:&E\rightarrow F \\
P(x_{1},\ldots ,x_{m}) &=&T(x_{1},\ldots ,x_{m})
\end{eqnarray*}%
is an $m$-homogeneous polynomial. This fact can be easily proved by using
tensor products. So, one can wonder why to define separately ideals of
polynomials and ideals of multilinear operators, having in mind that every $%
m $-linear operator is in fact an $m$-homogeneous polynomial. Well, we can
give a couple of reasons for that. A more obvious reason is that when
considering a multilinear operator as a polynomial we have%
\begin{equation*}
\left\Vert T(x_{1},\ldots ,x_{m})\right\Vert \leq \left\Vert T\right\Vert
\left\Vert (x_{1},\ldots ,x_{m})\right\Vert ^{m},
\end{equation*}%
and this estimate is less precise than%
\begin{equation}
\left\Vert T(x_{1},\ldots ,x_{m})\right\Vert \leq \left\Vert T\right\Vert
\left\Vert x_{1}\right\Vert \cdots \left\Vert x_{m}\right\Vert .  \label{g66}
\end{equation}%
So, one cannot unify the theory of polynomial ideals and multilinear ideals
just by realizing that multilinear operators are in fact homogeneous
polynomials. In this section we unify the theory of polynomials and
multilinear operators in a more careful analytic viewpoint. More precisely,
if $m$ is a given positive integer and $n_{1},...,n_{m}$ are positive
integers such that $n_{1}+\cdots +n_{j}=$ $m$ we introduce the notion of $%
\left( n_{1},...,n_{j}\right) $-homogeneous polynomial for $j\in \{1,...,m\}$%
. When $j=1$ we have an $m$-homogeneous polynomial and when $j=m$ then we
have an $m$-linear operator. This kind of maps will be called
multipolynomials.

\begin{definition}
\label{def1}Let $m\in \mathbb{N}$ and $\left( n_{1},\ldots ,n_{m}\right) \in 
\mathbb{N}^{m}$. A mapping $P:E_{1}\times \cdots \times E_{m}\rightarrow F$
is said to be an $\left( n_{1},\ldots ,n_{m}\right) $-\textit{homogeneous
polynomial} if, for each $i=1,\ldots ,m$, the mapping \linebreak $P\left(
x_{1},\ldots ,x_{i-1},\cdot ,x_{i+1},\ldots ,x_{m}\right) :E_{i}\rightarrow
F $ is an $n_{i}$-homogeneous polynomial for every \linebreak $x_{1}\in
E_{1},\ldots ,x_{i-1}\in E_{i-1},x_{i+1}\in E_{i+1},\ldots ,x_{m}\in E_{m}$
fixed.
\end{definition}

We shall denote by $\mathcal{P}_{a}\left( ^{n_{1}}E_{1},\ldots
,^{n_{m}}E_{m};F\right) $ the vector space of all $\left( n_{1},\ldots
,n_{m}\right) $-homogeneous polynomials from the cartesian product $%
E_{1}\times \cdots \times E_{m}$ into $F$. We shall represent by \linebreak $%
\mathcal{P}\left( ^{n_{1}}E_{1},\ldots ,^{n_{m}}E_{m};F\right) $ the
subspace of all continuous members of $\mathcal{P}_{a}\left(
^{n_{1}}E_{1},\ldots ,^{n_{m}}E_{m};F\right) $. For each $P\in \mathcal{P}%
_{a}\left( ^{n_{1}}E_{1},\ldots ,^{n_{m}}E_{m};F\right) $ we shall set%
\begin{equation*}
\left\Vert P\right\Vert :=\sup \left\{ \left\Vert P\left( x_{1},\ldots
,x_{m}\right) \right\Vert ;x_{i}\in E_{i},\underset{i}{\max }\left\Vert
x_{i}\right\Vert _{E_{i}}\leq 1\right\} .
\end{equation*}

Our first step is to present a comprehensive list conditions which
characterize the continuous multipolynomials similarly as in (\ref{g66}).

Henceforth $\mathcal{L}_{a}^{s}\left( ^{m}E;F\right) $ denotes the space of
all $m$-linear forms from $E\times \cdots \times E$ to $F$ which are
symmetric and for each $A\in \mathcal{L}_{a}\left( ^{m}E;F\right) $ the $m$%
-homogeneous polynomial $\widehat{A}\in \mathcal{P}_{a}\left( ^{m}E;F\right) 
$ is defined by $\widehat{A}\left( x\right) =Ax^{m}$ for every $x\in E$.

We recall some results from the theory of homogeneous polynomials between
Banach spaces that will be useful in this paper (these results can be found,
for instance, in \cite[Theorem 1.10]{Muj}, \cite[Theorem 2.2]{Muj} and \cite[%
Corollary 2.3]{Muj}).

\begin{itemize}
\item (Polarization Formula) If $A\in \mathcal{L}_{a}^{s}\left(
^{m}E;F\right) $, then for $x_{0},\ldots ,x_{m}\in E$ we have 
\begin{equation*}
A\left( x_{1},\ldots ,x_{m}\right) =\frac{1}{m!2^{m}}\underset{\varepsilon
_{j}=\pm 1}{\sum }\varepsilon _{1}\cdots \varepsilon _{m}A\left(
x_{0}+\varepsilon _{1}x_{1}+\cdots +\varepsilon _{m}x_{m}\right) ^{m}\text{.}
\end{equation*}

\item The mapping $A\mapsto \widehat{A}$ induces a vector space isomorphism
between $\mathcal{L}_{a}^{s}\left( ^{m}E;F\right) $ and $\mathcal{P}%
_{a}\left( ^{m}E;F\right) $.

\item We have the inequalities%
\begin{equation}
\left\Vert \widehat{A}\right\Vert \leq \left\Vert A\right\Vert \leq \frac{%
m^{m}}{m!}\left\Vert \widehat{A}\right\Vert \text{,}  \label{estre}
\end{equation}%
for every $A\in \mathcal{L}_{a}^{s}\left( ^{m}E;F\right) $.

\item A polynomial $P\in \mathcal{P}_{a}\left( ^{m}E;F\right) $ is
continuous if and only if $\left\Vert P\right\Vert <\infty $.

\item $\mathcal{P}\left( ^{m}E;F\right) $ is a Banach space under the norm $%
P\mapsto \left\Vert P\right\Vert $.

\item The mapping $A\mapsto \widehat{A}$ induces a topological isomorphism
between $\mathcal{L}^{s}\left( ^{m}E;F\right) $ and $\mathcal{P}\left(
^{m}E;F\right) $.
\end{itemize}

We begin with an useful lemma:

\begin{lemma}
\label{genlem2.5}Let $E_{1},\ldots ,E_{m},F$ be normed spaces and $P\in 
\mathcal{P}_{a}\left( ^{n_{1}}E_{1},\ldots ,^{n_{m}}E_{m};F\right) $. If $P$
is bounded by $C$ on an open ball $B_{E_{1}\times \cdots \times E_{m}}\left(
\left( a_{1},\ldots ,a_{m}\right) ;r\right) $ then $P$ is bounded by $C\frac{%
n_{1}^{n_{1}}}{n_{1}!}\cdots \frac{n_{m}^{n_{m}}}{n_{m}!}$ on the ball $%
B_{E_{1}\times \cdots \times E_{m}}\left( \left( 0,\ldots ,0\right)
;r\right) $.
\end{lemma}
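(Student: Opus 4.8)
The plan is to reduce the assertion to the classical one-variable estimate and then iterate it coordinate by coordinate. The key observation is that, since the product $E_{1}\times \cdots \times E_{m}$ carries the maximum norm, the ball $B_{E_{1}\times \cdots \times E_{m}}\left( \left( c_{1},\ldots ,c_{m}\right) ;r\right)$ is exactly the Cartesian product $B_{E_{1}}\left( c_{1};r\right) \times \cdots \times B_{E_{m}}\left( c_{m};r\right)$. Hence one may vary a single coordinate inside its own ball while keeping the others fixed in theirs, and, by Definition \ref{def1}, the restriction of $P$ to such a slice is an $n_{i}$-homogeneous polynomial in that coordinate.

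First I would record the one-variable case: if $Q\in \mathcal{P}_{a}\left( ^{n}E;F\right)$ is bounded by $C$ on a ball $B_{E}\left( a;r\right)$, then $Q$ is bounded by $C\frac{n^{n}}{n!}$ on $B_{E}\left( 0;r\right)$. This is the classical estimate of \cite[Theorem 2.2]{Muj}, and it follows quickly from the Polarization Formula together with the inequality (\ref{estre}): polarizing about the center $a$ bounds the associated symmetric $n$-linear form $A$ by $C\frac{n^{n}}{n!r^{n}}$, whence $\left\Vert Q\left( y\right) \right\Vert =\left\Vert Ay^{n}\right\Vert \leq \left\Vert A\right\Vert \left\Vert y\right\Vert ^{n}\leq C\frac{n^{n}}{n!}$ for every $y$ with $\left\Vert y\right\Vert <r$.

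Next I would run the iteration. Starting from the hypothesis that $P$ is bounded by $C$ on $B_{E_{1}\times \cdots \times E_{m}}\left( \left( a_{1},\ldots ,a_{m}\right) ;r\right)$, I fix $x_{2},\ldots ,x_{m}$ with $\left\Vert x_{j}-a_{j}\right\Vert <r$ and apply the one-variable case to the $n_{1}$-homogeneous polynomial $x_{1}\mapsto P\left( x_{1},x_{2},\ldots ,x_{m}\right)$, which is bounded by $C$ on $B_{E_{1}}\left( a_{1};r\right)$. This shows that $P$ is bounded by $C_{1}:=C\frac{n_{1}^{n_{1}}}{n_{1}!}$ on $B_{E_{1}\times \cdots \times E_{m}}\left( \left( 0,a_{2},\ldots ,a_{m}\right) ;r\right)$. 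Repeating the argument in the second variable (with the first now centered at $0$), then in the third, and so on, after $m$ steps $P$ is bounded by $C\frac{n_{1}^{n_{1}}}{n_{1}!}\cdots \frac{n_{m}^{n_{m}}}{n_{m}!}$ on $B_{E_{1}\times \cdots \times E_{m}}\left( \left( 0,\ldots ,0\right) ;r\right)$, which is the claim.

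The only point requiring care is the bookkeeping of the centers during the iteration: at the $i$-th step the first $i-1$ coordinates are already centered at $0$ while the remaining ones are still centered at $a_{i},\ldots ,a_{m}$, and one must verify that the bound produced at the previous step holds on precisely the mixed ball needed to feed the one-variable estimate into coordinate $i$. This is immediate from the product structure of the ball noted above, so no genuine obstacle arises; the factor $\frac{n_{i}^{n_{i}}}{n_{i}!}$ introduced at each step is independent of the frozen coordinates, which is exactly what allows the successive bounds to compose multiplicatively.
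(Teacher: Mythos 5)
Your proof is correct and is essentially the paper's own argument: the paper proceeds by induction on $m$, and its inductive step handles the last coordinate by exactly the polarization-about-the-center computation that proves the one-variable estimate you invoke, so your coordinate-by-coordinate iteration (resting on the product structure of max-norm balls) is the same reduction, with the one-variable case cited as a black box rather than re-derived inline. One trivial correction: the one-variable estimate is \cite[Lemma 2.5]{Muj}, not Theorem 2.2 (the latter is the correspondence between polynomials and symmetric multilinear maps, which is what the paper uses to symmetrize before polarizing).
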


\begin{proof}
Let $\left( x_{1},\ldots ,x_{m}\right) \in B_{E_{1}\times \cdots \times
E_{m}}\left( \left( 0,\ldots ,0\right) ;r\right) $. We prove this by
induction on $m$. When $m=1$ it is just the already well-known result from
the theory of homogeneous polynomials between Banach spaces (see \cite[Lemma
2.5]{Muj}). If $m>1$ then the multipolynomial $P\left( \underset{m-1}{%
\underbrace{\cdot ,\ldots ,\cdot }},y\right) \in \mathcal{P}_{a}\left(
^{n_{1}}E_{1},\ldots ,^{n_{m-1}}E_{m-1};F\right) $ is bounded by $C$ on the
ball \linebreak $B_{E_{1}\times \cdots \times E_{m-1}}\left( \left(
a_{1},\ldots ,a_{m-1}\right) ;r\right) $, for all $y\in B_{E_{m}}\left(
a_{m};r\right) $. The induction hypothesis implies that $P\left( \underset{%
m-1}{\underbrace{\cdot ,\ldots ,\cdot }},y\right) $ is bounded by $C\frac{%
n_{1}^{n_{1}}}{n_{1}!}\cdots \frac{n_{m-1}^{n_{m-1}}}{n_{m-1}!}$ on the ball 
$B_{E_{1}\times \cdots \times E_{m-1}}\left( \left( 0,\ldots ,0\right)
;r\right) $, whenever $y\in B_{E_{m}}\left( a_{m};r\right) $. We also have
from \cite[Theorem 2.2]{Muj} that the $n_{m}$-linear mapping, denoted by
\linebreak $A\left( x_{1},\ldots ,x_{m-1}\right) ,$ associated to the
polynomial $P\left( x_{1},\ldots ,x_{m-1},\cdot \right) \in \mathcal{P}%
_{a}\left( ^{n_{m}}E_{m};F\right) $ can be taken symmetrical. Now applying
the Polarization Formula \cite[Theorem 1.10]{Muj} to $A\left( x_{1},\ldots
,x_{m-1}\right) $ with $x_{0}=a_{m}$ and $x_{1}=\cdots =x_{n_{m}}=\frac{x_{m}%
}{n_{m}}$ we get%
\begin{align*}
& \left\Vert P\left( x_{1},\ldots ,x_{m}\right) \right\Vert \\
& =n_{m}^{n_{m}}\left\Vert A\left( x_{1},\ldots ,x_{m-1}\right) \left( \frac{%
x_{m}}{n_{m}}\right) ^{n_{m}}\right\Vert \\
& =n_{m}^{n_{m}}\left\Vert \frac{1}{n_{m}!2^{n_{m}}}\underset{\varepsilon
_{j}=\pm 1}{\sum }\varepsilon _{1}\cdots \varepsilon _{n_{m}}A\left(
x_{1},\ldots ,x_{m-1}\right) \left( a_{m}+\left( \varepsilon _{1}+\cdots
+\varepsilon _{n_{m}}\right) \frac{x_{m}}{n_{m}}\right) ^{n_{m}}\right\Vert
\\
& \leq \frac{n_{m}^{n_{m}}}{n_{m}!2^{n_{m}}}\underset{\varepsilon _{j}=\pm 1}%
{\sum }\left\Vert A\left( x_{1},\ldots ,x_{m-1}\right) \left( a_{m}+\left(
\varepsilon _{1}+\cdots +\varepsilon _{n_{m}}\right) \frac{x_{m}}{n_{m}}%
\right) ^{n_{m}}\right\Vert \\
& =\frac{n_{m}^{n_{m}}}{n_{m}!2^{n_{m}}}\underset{\varepsilon _{j}=\pm 1}{%
\sum }\left\Vert P\left( \underset{m-1}{\underbrace{\cdot ,\ldots ,\cdot }}%
,a_{m}+\left( \varepsilon _{1}+\cdots +\varepsilon _{n_{m}}\right) \frac{%
x_{m}}{n_{m}}\right) \left( x_{1},\ldots ,x_{m-1}\right) \right\Vert \\
& \leq \frac{n_{m}^{n_{m}}}{n_{m}!2^{n_{m}}}2^{n_{m}}C\frac{n_{1}^{n_{1}}}{%
n_{1}!}\cdots \frac{n_{m-1}^{n_{m-1}}}{n_{m-1}!} \\
& =C\frac{n_{1}^{n_{1}}}{n_{1}!}\cdots \frac{n_{m-1}^{n_{m-1}}}{n_{m-1}!}%
\frac{n_{m}^{n_{m}}}{n_{m}!}\text{.}
\end{align*}%
Then it follows that $P$ is bounded by $C\frac{n_{1}^{n_{1}}}{n_{1}!}\cdots 
\frac{n_{m}^{n_{m}}}{n_{m}!}$ on the ball $B_{E_{1}\times \cdots \times
E_{m}}\left( \left( 0,\ldots ,0\right) ;r\right) $, and the proof is
complete.
\end{proof}

Now we are ready to characterize continuous multipolynomials.

\begin{theorem}
\label{16}Let $E_{1},\ldots ,E_{m},F$ be normed spaces and $P\in \mathcal{P}%
_{a}\left( ^{n_{1}}E_{1},\ldots ,^{n_{m}}E_{m};F\right) $. The following
conditions are equivalent:

\begin{description}
\item[(i)] $P$ is continuous;

\item[(ii)] $P$ is continuous at the origin;

\item[(iii)] There exists a constant $C\geq 0$ such that%
\begin{equation*}
\left\Vert P\left( x_{1},\ldots ,x_{m}\right) \right\Vert \leq C\left\Vert
x_{1}\right\Vert ^{n_{1}}\cdots \left\Vert x_{m}\right\Vert ^{n_{m}}\text{,}
\end{equation*}%
for all $\left( x_{1},\ldots ,x_{m}\right) \in E_{1}\times \cdots \times
E_{m}$;

\item[(iv)] $\left\Vert P\right\Vert <\infty $;

\item[(v)] $P$ is uniformly continuous on bounded subsets of $E_{1}\times
\cdots \times E_{m}$;

\item[(vi)] $P$ is bounded on every ball with finite radius;

\item[(vii)] $P$ is bounded on some ball;

\item[(viii)] $P$ is bounded on some ball with center at the origin.
\end{description}
\end{theorem}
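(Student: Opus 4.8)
The plan is to establish the eight conditions as a single cycle of implications, closely following the one--variable theory of \cite[Theorem 2.2 and Corollary 2.3]{Muj}; the only genuinely multivariable ingredient is Lemma~\ref{genlem2.5}, which is exactly what converts a bound on an arbitrary ball into a bound on a ball centred at the origin. Concretely, I would prove
\[
\text{(i)}\Rightarrow\text{(ii)}\Rightarrow\text{(viii)}\Rightarrow\text{(iii)}\Rightarrow\text{(iv)}\Rightarrow\text{(iii)},\qquad
\text{(iii)}\Rightarrow\text{(vi)}\Rightarrow\text{(vii)}\Rightarrow\text{(viii)},\qquad
\text{(iii)}\Rightarrow\text{(v)}\Rightarrow\text{(i)},
\]
so that every condition becomes reachable from every other.

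The routine part is everything except $\text{(iii)}\Rightarrow\text{(v)}$. Indeed, $\text{(i)}\Rightarrow\text{(ii)}$ and $\text{(v)}\Rightarrow\text{(i)}$ are immediate, and $\text{(vi)}\Rightarrow\text{(vii)}$ is trivial. Since each slice of $P$ is homogeneous of positive degree we have $P(0,\ldots,0)=0$, so continuity at the origin produces a $\delta>0$ with $\left\Vert P(x_1,\ldots,x_m)\right\Vert<1$ whenever $\max_i\left\Vert x_i\right\Vert<\delta$; this is $\text{(ii)}\Rightarrow\text{(viii)}$. The implication $\text{(vii)}\Rightarrow\text{(viii)}$ is precisely Lemma~\ref{genlem2.5}. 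For $\text{(viii)}\Rightarrow\text{(iii)}$ I would use $n_i$--homogeneity in each slot separately: if $\left\Vert P\right\Vert\le C$ on $B(0;r)$, then for nonzero $x_i$ put $y_i=\tfrac{r}{2\left\Vert x_i\right\Vert}x_i$, so that $\max_i\left\Vert y_i\right\Vert=r/2<r$ and
\[
P(x_1,\ldots,x_m)=\prod_{i=1}^{m}\Big(\tfrac{2\left\Vert x_i\right\Vert}{r}\Big)^{n_i}P(y_1,\ldots,y_m),
\]
which yields $\left\Vert P(x_1,\ldots,x_m)\right\Vert\le C(2/r)^{n_1+\cdots+n_m}\prod_i\left\Vert x_i\right\Vert^{n_i}$ (the case of some $x_i=0$ being trivial). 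Restricting (iii) to $\max_i\left\Vert x_i\right\Vert\le 1$ gives $\text{(iii)}\Rightarrow\text{(iv)}$, and the same scaling run backwards from $C=\left\Vert P\right\Vert$ gives $\text{(iv)}\Rightarrow\text{(iii)}$; finally $\text{(iii)}\Rightarrow\text{(vi)}$ follows by bounding $\left\Vert x_i\right\Vert\le\left\Vert a_i\right\Vert+\rho$ on a ball $B((a_1,\ldots,a_m);\rho)$.

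The main obstacle is $\text{(iii)}\Rightarrow\text{(v)}$. Here I would fix a bounded set inside $\{\max_i\left\Vert x_i\right\Vert\le\rho\}$ and decompose the difference by a slot--by--slot telescoping sum,
\[
P(x)-P(y)=\sum_{k=1}^{m}\big[\,Q_k(x_k)-Q_k(y_k)\,\big],\qquad
Q_k(u):=P(x_1,\ldots,x_{k-1},u,y_{k+1},\ldots,y_m),
\]
so that only one variable changes in each term. Each $Q_k$ is an $n_k$--homogeneous polynomial whose norm is controlled by (iii), namely $\left\Vert Q_k\right\Vert\le C\rho^{(n_1+\cdots+n_m)-n_k}$; letting $A_k$ be its associated symmetric $n_k$--linear form, the estimate \eqref{estre} bounds $\left\Vert A_k\right\Vert$ in terms of $\left\Vert Q_k\right\Vert$. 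Expanding $A_k u^{n_k}-A_k v^{n_k}$ by multilinearity,
\[
A_k u^{n_k}-A_k v^{n_k}=\sum_{j=0}^{n_k-1}A_k\big(\underbrace{u,\ldots,u}_{j},\,u-v,\,\underbrace{v,\ldots,v}_{n_k-1-j}\big),
\]
gives $\left\Vert Q_k(u)-Q_k(v)\right\Vert\le n_k\left\Vert A_k\right\Vert\rho^{\,n_k-1}\left\Vert u-v\right\Vert$ for $\left\Vert u\right\Vert,\left\Vert v\right\Vert\le\rho$. Summing over $k$ produces a Lipschitz estimate $\left\Vert P(x)-P(y)\right\Vert\le L\max_k\left\Vert x_k-y_k\right\Vert$ on the chosen bounded set, with $L$ depending only on $C$, $\rho$ and the $n_i$, which is exactly the uniform continuity in (v). The technical care in this last step lies in the bookkeeping of the symmetrization constants and in checking that $L$ is independent of the particular points, so that the continuity obtained is genuinely \emph{uniform} on bounded sets.
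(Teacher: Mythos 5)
Your proposal is correct and takes essentially the same approach as the paper: the homogeneity--scaling arguments, Lemma \ref{genlem2.5} for (vii)$\Rightarrow$(viii), and the telescoping decomposition combined with the associated symmetric multilinear forms and the estimate (\ref{estre}) to get uniform continuity are exactly the ingredients of the paper's proof. The only difference is a cosmetic reshuffling of the implication cycle (you factor the scaling argument through (viii) and prove (v) from (iii) instead of (iv)), which does not change the substance.
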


\begin{proof}
The implications $\left( i\right) \Rightarrow (ii)$ and $\left( vi\right)
\Rightarrow \left( vii\right) $ are obvious.

$\left( ii\right) \Rightarrow \left( iii\right) $: Suppose $P$ continuous at
the origin. Then, there exist $\delta >0$ such that%
\begin{equation*}
\left( x_{1},\ldots ,x_{m}\right) \in E_{1}\times \cdots \times
E_{m},~\left\Vert \left( x_{1},\ldots ,x_{m}\right) \right\Vert <\delta
\Rightarrow \left\Vert P\left( x_{1},\ldots ,x_{m}\right) \right\Vert <1%
\text{.}
\end{equation*}%
The inequality in $\left( iii\right) $ is obvious if $x_{i}=0$ for some $%
i=1,\ldots ,m$. So we can assume $x_{i}\neq 0$ for all $i=1,\ldots ,m$. Then,%
\begin{equation*}
\left\Vert \left( \frac{\delta x_{1}}{2\left\Vert x_{1}\right\Vert },\ldots ,%
\frac{\delta x_{m}}{2\left\Vert x_{m}\right\Vert }\right) \right\Vert =\frac{%
\delta }{2}<\delta
\end{equation*}%
and thus%
\begin{align*}
\left\Vert P\left( x_{1},\ldots ,x_{m}\right) \right\Vert & =\left( \frac{2}{%
\delta }\right) ^{n_{1}+\cdots +n_{m}}\left\Vert x_{1}\right\Vert
^{n_{1}}\cdots \left\Vert x_{m}\right\Vert ^{n_{m}}\left\Vert P\left( \frac{%
\delta x_{1}}{2\left\Vert x_{1}\right\Vert },\ldots ,\frac{\delta x_{m}}{%
2\left\Vert x_{m}\right\Vert }\right) \right\Vert \\
& <\left( \frac{2}{\delta }\right) ^{n_{1}+\cdots +n_{m}}\left\Vert
x_{1}\right\Vert ^{n_{1}}\cdots \left\Vert x_{m}\right\Vert ^{n_{m}}\text{.}
\end{align*}%
This give us $\left( iii\right) $ with $C=\left( \frac{2}{\delta }\right)
^{n_{1}+\cdots +n_{m}}$.

$\left( iii\right) \Rightarrow \left( iv\right) $: If $\left( iii\right) $
is true then we have in particular,%
\begin{equation*}
\left\Vert P\left( x_{1},\ldots ,x_{m}\right) \right\Vert \leq C\left\Vert
x_{1}\right\Vert ^{n_{1}}\cdots \left\Vert x_{m}\right\Vert ^{n_{m}}\leq C%
\text{,}
\end{equation*}%
for all $x_{1}\in E_{1},\ldots ,x_{m}\in E_{m}$, with $\left\Vert
x_{1}\right\Vert ,\ldots ,\left\Vert x_{m}\right\Vert \leq 1$. This shows
that $\left\Vert P\right\Vert \leq C$.

$\left( iv\right) \Rightarrow \left( v\right) $: Let $a=\left( a_{1},\ldots
,a_{m}\right) ,~x=\left( x_{1},\ldots ,x_{m}\right) \in E_{1}\times \cdots
\times E_{m}$ with 
\begin{equation*}
\underset{i}{\max }\left\Vert x_{i}\right\Vert \leq r
\end{equation*}%
~and~ 
\begin{equation*}
\underset{i}{\max }\left\Vert a_{i}\right\Vert \leq r.
\end{equation*}%
Then%
\begin{eqnarray}
\left\Vert P\left( a_{1},\ldots ,a_{i-1},\cdot ,x_{i+1},\ldots ,x_{m}\right)
\right\Vert &\leq &\left\Vert a_{1}\right\Vert ^{n_{1}}\cdots \left\Vert
a_{i-1}\right\Vert ^{n_{i-1}}\left\Vert x_{i+1}\right\Vert ^{n_{i+1}}\cdots
\left\Vert x_{m}\right\Vert ^{n_{m}}\left\Vert P\right\Vert  \label{22} \\
&\leq &r^{n_{1}+\cdots +n_{i-1}+n_{i+1}+\cdots +n_{m}}\left\Vert
P\right\Vert \text{.}  \notag
\end{eqnarray}%
Let $A\left( a_{1},\ldots ,a_{i-1},x_{i+1},\ldots ,x_{m}\right) $ be the
symmetric $n_{i}$-linear form associated to \linebreak $P\left( a_{1},\ldots
,a_{i-1},\cdot ,x_{i+1},\ldots ,x_{m}\right) .$ From (\ref{estre}) we get%
\begin{eqnarray}
\left\Vert A\left( a_{1},\ldots ,a_{i-1},x_{i+1},\ldots ,x_{m}\right)
\right\Vert &\leq &\frac{n_{i}^{n_{i}}}{n_{i}!}\left\Vert P\left(
a_{1},\ldots ,a_{i-1},\cdot ,x_{i+1},\ldots ,x_{m}\right) \right\Vert
\label{23} \\
&\leq &\frac{n_{i}^{n_{i}}}{n_{i}!}r^{n_{1}+\cdots +n_{i-1}+n_{i+1}+\cdots
+n_{m}}\left\Vert P\right\Vert \text{,}  \notag
\end{eqnarray}%
for every $i=1,\ldots ,m$. Since $\left\Vert P\right\Vert <\infty $ the
inequalities (\ref{22}) and (\ref{23}) show us that both the $i$-th ordinary
polynomial $P\left( a_{1},\ldots ,a_{i-1},\cdot ,x_{i+1},\ldots
,x_{m}\right) \in $ $\mathcal{P}_{a}\left( ^{n_{i}}E_{i};F\right) $ as its
associated multilinear mapping $A\left( a_{1},\ldots ,a_{i-1},x_{i+1},\ldots
,x_{m}\right) \in \mathcal{L}_{a}\left( ^{n_{i}}E_{i};F\right) $ are
continuous for each $i=1,\ldots ,m$. Now we can write%
\begin{align*}
& \left\Vert P\left( x\right) -P\left( a\right) \right\Vert \\
& \leq \overset{m}{\underset{i=1}{\sum }}\left\Vert P\left( a_{1},\ldots
,a_{i-1},x_{i},\ldots ,x_{m}\right) -P\left( a_{1},\ldots
,a_{i},x_{i+1},\ldots ,x_{m}\right) \right\Vert \\
& =\overset{m}{\underset{i=1}{\sum }}\left\Vert A\left( a_{1},\ldots
,a_{i-1},x_{i+1},\ldots ,x_{m}\right) \left( x_{i}\right) ^{n_{i}}-A\left(
a_{1},\ldots ,a_{i-1},x_{i+1},\ldots ,x_{m}\right) \left( a_{i}\right)
^{n_{i}}\right\Vert \\
& \leq \overset{m}{\underset{i=1}{\sum }}\left( \left\Vert 
\begin{array}{c}
A\left( a_{1},\ldots ,a_{i-1},x_{i+1},\ldots ,x_{m}\right) \left(
x_{i}-a_{i},x_{i},\ldots ,x_{i}\right) +\cdots \\ 
\cdots +A\left( a_{1},\ldots ,a_{i-1},x_{i+1},\ldots ,x_{m}\right) \left(
a_{i},\ldots ,a_{i},x_{i}-a_{i}\right)%
\end{array}%
\right\Vert \right) \\
& \leq \overset{m}{\underset{i=1}{\sum }}n_{i}\left\Vert A\left(
a_{1},\ldots ,a_{i-1},x_{i+1},\ldots ,x_{m}\right) \right\Vert \left\Vert
x_{i}-a_{i}\right\Vert r^{n_{i}-1} \\
& \leq \overset{m}{\underset{i=1}{\sum }}n_{i}\left( \frac{n_{i}^{n_{i}}}{%
n_{i}!}r^{n_{1}+\cdots +n_{i-1}+n_{i+1}+\cdots +n_{m}}\left\Vert
P\right\Vert \right) \left\Vert x-a\right\Vert r^{n_{i}-1} \\
& \leq \left( \overset{m}{\underset{i=1}{\sum }}\frac{n_{i}^{n_{i}+1}}{n_{i}!%
}\right) r^{n_{1}+\cdots +n_{m}-1}\left\Vert P\right\Vert \left\Vert
x-a\right\Vert \text{,}
\end{align*}%
and the uniform continuity of $P$ on bounded subsets of $E_{1}\times \cdots
\times E_{m}$ follows.

$\left( v\right) \Rightarrow \left( i\right) $: Let us show that $P$ is
continuous at an arbitrary point $a=\left( a_{1},\ldots ,a_{m}\right) \in
E=E_{1}\times \cdots \times E_{m}$. Given $\varepsilon >0$ it follows from
the uniform continuity of $P$ on bounded subsets that there exist $\delta
_{0}>0$ such that, for every $x=\left( x_{1},\ldots ,x_{m}\right) ,y=\left(
y_{1},\ldots ,y_{m}\right) \in B_{E}\left( 0;\left\Vert a\right\Vert
+1\right) $,%
\begin{equation*}
\left\Vert x-y\right\Vert <\delta _{0}\Rightarrow \left\Vert P\left(
x\right) -P\left( y\right) \right\Vert <\varepsilon \text{.}
\end{equation*}%
Defining $\delta =\min \left\{ \delta _{0},1\right\} $ we get%
\begin{equation*}
x\in E_{1}\times \cdots \times E_{m},~~\left\Vert x-a\right\Vert <\delta
\Rightarrow \left\Vert P\left( x\right) -P\left( a\right) \right\Vert
<\varepsilon
\end{equation*}%
and thus $P$ is continuous at the point $a$.

$\left( iii\right) \Rightarrow \left( vi\right) $: Let $B$ be a ball with
center at $\left( a_{1},\ldots ,a_{m}\right) \in E_{1}\times \cdots \times
E_{m}$ and radius $r>0$. For every $\left( x_{1},\ldots ,x_{m}\right) \in B$
the hypothesis $\left( iii\right) $ give us a constant $C\geq 0$ such that%
\begin{equation*}
\left\Vert P\left( x_{1},\ldots ,x_{m}\right) \right\Vert \leq C\left\Vert
x_{1}\right\Vert ^{n_{1}}\cdots \left\Vert x_{m}\right\Vert ^{n_{m}}\leq
C(r+\left\Vert a_{1}\right\Vert )^{n_{1}}\cdots \left( r+\left\Vert
a_{m}\right\Vert \right) ^{n_{m}}
\end{equation*}%
and so $P$ is bounded on $B$.

$\left( vii\right) \Rightarrow \left( viii\right) $: It follows immediately
from the Lemma \ref{genlem2.5}.

$\left( viii\right) \Rightarrow \left( iv\right) $: Suppose that there exist 
$r>0$ ~and~ $C\geq 0$ such that%
\begin{equation*}
\left\Vert P\left( x_{1},\ldots ,x_{m}\right) \right\Vert \leq C,~~~~\forall
\left( x_{1},\ldots ,x_{m}\right) \in B_{E_{1}\times \cdots \times
E_{m}}\left( \left( 0,\ldots ,0\right) ;r\right) \text{.}
\end{equation*}%
Thus, given $x_{1}\in E_{1},\ldots ,x_{m}\in E_{m}$, with $\left\Vert
x_{1}\right\Vert ,\ldots ,\left\Vert x_{m}\right\Vert \leq 1$, we have $%
\left( \frac{r}{2}x_{1},\ldots ,\frac{r}{2}x_{m}\right) \in B_{E_{1}\times
\cdots \times E_{m}}\left( \left( 0,\ldots ,0\right) ;r\right) $ and hence%
\begin{equation*}
\left\Vert P\left( x_{1},\ldots ,x_{m}\right) \right\Vert =\left( \frac{2}{r}%
\right) ^{n_{1}+\cdots +n_{m}}\left\Vert P\left( \frac{r}{2}x_{1},\ldots ,%
\frac{r}{2}x_{m}\right) \right\Vert \leq C\left( \frac{2}{r}\right)
^{n_{1}+\cdots +n_{m}}\text{.}
\end{equation*}
\end{proof}

\begin{corollary}
Let $E_{1},\ldots ,E_{m}$ and $F$ be normed spaces and let $P$ be a
continuous multipolynomial in $\mathcal{P}\left( ^{n_{1}}E_{1},\ldots
,^{n_{m}}E_{m};F\right) $. Then:

\begin{description}
\item[(i)] $\left\Vert P\left( x_{1},\ldots ,x_{m}\right) \right\Vert \leq
\left\Vert P\right\Vert \left\Vert x_{1}\right\Vert ^{n_{1}}\cdots
\left\Vert x_{m}\right\Vert ^{n_{m}}$, for all $\left( x_{1},\ldots
,x_{m}\right) \in E_{1}\times \cdots \times E_{m}$.

\item[(ii)] $\left\Vert P\right\Vert =\inf \left\{ C\geq 0;\left\Vert
P\left( x_{1},\ldots ,x_{m}\right) \right\Vert \leq C\left\Vert
x_{1}\right\Vert ^{n_{1}}\cdots \left\Vert x_{m}\right\Vert ^{n_{m}}\text{
for all }x_{j}\in E_{j},j=1,\ldots ,m\right\} $.
\end{description}
\end{corollary}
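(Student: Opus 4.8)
The plan is to deduce both statements directly from the $n_i$-homogeneity of $P$ in each variable, together with the characterization already established in Theorem \ref{16}; in particular I will use the definition of $\left\Vert P\right\Vert$ as the supremum over the unit polydisc $\left\{\max_i\left\Vert x_i\right\Vert\leq 1\right\}$.

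For part (i), I would first dispose of the degenerate case. If $x_i=0$ for some $i$, then the section $P\left(x_1,\ldots,x_{i-1},\cdot,x_{i+1},\ldots,x_m\right)$ is $n_i$-homogeneous by Definition \ref{def1} and hence vanishes at the origin, so the left-hand side is $0$ and the inequality holds trivially. Thus I may assume $x_i\neq 0$ for every $i$. The key observation is that $n_i$-homogeneity in each slot yields the scaling identity
\[
P\left(x_1,\ldots,x_m\right)=\left\Vert x_1\right\Vert^{n_1}\cdots\left\Vert x_m\right\Vert^{n_m}\,P\left(\frac{x_1}{\left\Vert x_1\right\Vert},\ldots,\frac{x_m}{\left\Vert x_m\right\Vert}\right).
\]
Since each normalized vector has norm $1$, the argument on the right lies in the unit polydisc, so the norm of that polynomial value is at most $\left\Vert P\right\Vert$ by definition. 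Taking norms and pulling out the scalar factors gives exactly the asserted bound.

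For part (ii), let $S$ denote the set of admissible constants, that is, those $C\geq 0$ for which the inequality in (i) holds for all $x_j\in E_j$. Part (i) shows $\left\Vert P\right\Vert\in S$, whence $\inf S\leq\left\Vert P\right\Vert$. Conversely, for any $C\in S$ and any $x_j$ with $\left\Vert x_j\right\Vert\leq 1$ we have $\left\Vert P\left(x_1,\ldots,x_m\right)\right\Vert\leq C\left\Vert x_1\right\Vert^{n_1}\cdots\left\Vert x_m\right\Vert^{n_m}\leq C$; taking the supremum over the unit polydisc yields $\left\Vert P\right\Vert\leq C$, and since $C\in S$ was arbitrary, $\left\Vert P\right\Vert\leq\inf S$. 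Combining the two inequalities gives $\left\Vert P\right\Vert=\inf S$.

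I do not expect any genuine obstacle: the corollary is a direct consequence of Theorem \ref{16} and the separate $n_i$-homogeneity in each variable recorded in Definition \ref{def1}. The only point requiring a little care is the degenerate case in (i), which must be handled separately because the normalization argument divides by $\left\Vert x_i\right\Vert$; homogeneity, however, makes that case immediate.
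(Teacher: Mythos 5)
Your proof is correct and takes essentially the same approach as the paper: the corollary is stated there without a separate proof, being an immediate consequence of Theorem \ref{16}, and your normalization/scaling argument (including the separate treatment of the degenerate case $x_{i}=0$) is exactly the device the paper itself uses inside the proof of Theorem \ref{16}, e.g.\ in the steps $\left( ii\right) \Rightarrow \left( iii\right)$ and $\left( viii\right) \Rightarrow \left( iv\right)$. Nothing is missing.
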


\begin{remark}
The following results are also ensured:

\begin{itemize}
\item Still in the context of normed spaces, the pointwise convergence of a
sequence $\left( P_{j}\right) $ in $\mathcal{P}_{a}\left(
^{n_{1}}E_{1},\ldots ,^{n_{m}}E_{m};F\right) $ implies that the limit
mapping $P$ is also a multipolynomial in that same space. More can be said
if we take Banach spaces into account.

\item The mapping $P\mapsto \left\Vert P\right\Vert $ defines a norm on the
vector space $\mathcal{P}\left( ^{n_{1}}E_{1},\ldots ,^{n_{m}}E_{m};F\right) 
$ which turns it into a Banach space, provided that $F$ is complete.

\item If $F$ is just a normed space and all the factor spaces of the domain
of $P$ are Banach spaces, then continuity and separated continuity are the
same. Besides, the Uniform Boundedness Principle (UBP) and the
Banach--Steinhaus Theorem (BST) hold, as we see next.
\end{itemize}
\end{remark}

\begin{theorem}[Uniform Boundedness Principle]
\label{plumultpol}Let $E_{1},\ldots ,E_{m}$ be Banach spaces, $F$ be a
normed space and let $\left\{ P_{i}\right\} _{i\in I}$ be a family in $%
\mathcal{P}\left( ^{n_{1}}E_{1},\ldots ,^{n_{m}}E_{m};F\right) $. The
following conditions are equivalent:

\begin{description}
\item[(i)] For every $x=\left( x_{1},\ldots ,x_{m}\right) \in E_{1}\times
\cdots \times E_{m}$ there exists $C_{x}<\infty $ such that%
\begin{equation*}
\underset{i\in I}{\sup }\left\Vert P_{i}\left( x\right) \right\Vert <C_{x}%
\text{.}
\end{equation*}

\item[(ii)] The family $\left\{ P_{i}\right\} _{i\in I}$ is norm bounded,
that is,%
\begin{equation*}
\underset{i\in I}{\sup }\left\Vert P_{i}\right\Vert <\infty \text{.}
\end{equation*}
\end{description}
\end{theorem}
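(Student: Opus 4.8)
The plan is to prove the trivial implication $(ii)\Rightarrow(i)$ directly and to obtain the substantial implication $(i)\Rightarrow(ii)$ by a Baire category argument on the product space $E:=E_{1}\times\cdots\times E_{m}$, equipped with its max-norm $\left\Vert (x_{1},\ldots,x_{m})\right\Vert =\max_{i}\left\Vert x_{i}\right\Vert$; this space is complete precisely because each factor $E_{1},\ldots,E_{m}$ is a Banach space. For $(ii)\Rightarrow(i)$, if $M:=\sup_{i\in I}\left\Vert P_{i}\right\Vert<\infty$, then part (i) of the preceding Corollary gives, for each fixed $x=(x_{1},\ldots,x_{m})$, the estimate $\left\Vert P_{i}(x)\right\Vert\le\left\Vert P_{i}\right\Vert\left\Vert x_{1}\right\Vert^{n_{1}}\cdots\left\Vert x_{m}\right\Vert^{n_{m}}\le M\left\Vert x_{1}\right\Vert^{n_{1}}\cdots\left\Vert x_{m}\right\Vert^{n_{m}}$ for every $i$, so one may take any $C_{x}$ exceeding this finite bound.

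For $(i)\Rightarrow(ii)$, I would first introduce, for each $k\in\mathbb{N}$,
\[
A_{k}:=\{x\in E:\left\Vert P_{i}(x)\right\Vert\le k\text{ for all }i\in I\}=\bigcap_{i\in I}\{x\in E:\left\Vert P_{i}(x)\right\Vert\le k\}.
\]
Since the family lies in $\mathcal{P}\left( ^{n_{1}}E_{1},\ldots,^{n_{m}}E_{m};F\right)$, each $P_{i}$ is continuous, so every set $\{x:\left\Vert P_{i}(x)\right\Vert\le k\}$ is closed and hence so is their intersection $A_{k}$. The pointwise boundedness hypothesis (i) ensures $\sup_{i}\left\Vert P_{i}(x)\right\Vert<\infty$ for each $x$, so $x\in A_{k}$ whenever $k\ge C_{x}$; therefore $E=\bigcup_{k\in\mathbb{N}}A_{k}$. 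As $E$ is a complete metric space, the Baire category theorem produces an index $k_{0}$ for which $A_{k_{0}}$ has nonempty interior, i.e.\ there exist $a=(a_{1},\ldots,a_{m})\in E$ and $r>0$ with $B_{E_{1}\times\cdots\times E_{m}}(a;r)\subseteq A_{k_{0}}$, and on this ball $\left\Vert P_{i}(x)\right\Vert\le k_{0}$ for every $i\in I$.

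The final step transfers this bound to a ball about the origin and rescales. By Lemma \ref{genlem2.5} applied to each $P_{i}$ with the common constant $C=k_{0}$, every $P_{i}$ is bounded by $k_{0}\frac{n_{1}^{n_{1}}}{n_{1}!}\cdots\frac{n_{m}^{n_{m}}}{n_{m}!}$ on $B_{E_{1}\times\cdots\times E_{m}}\left( (0,\ldots,0);r\right)$, the crucial point being that this bound does not depend on $i$. Arguing as in the implication $(viii)\Rightarrow(iv)$ of Theorem \ref{16}, for any $x_{1},\ldots,x_{m}$ with $\max_{j}\left\Vert x_{j}\right\Vert\le 1$ the point $\left(\tfrac{r}{2}x_{1},\ldots,\tfrac{r}{2}x_{m}\right)$ lies in that ball, whence homogeneity yields
\[
\left\Vert P_{i}(x_{1},\ldots,x_{m})\right\Vert=\Big(\tfrac{2}{r}\Big)^{n_{1}+\cdots+n_{m}}\Big\Vert P_{i}\Big(\tfrac{r}{2}x_{1},\ldots,\tfrac{r}{2}x_{m}\Big)\Big\Vert\le\Big(\tfrac{2}{r}\Big)^{n_{1}+\cdots+n_{m}}k_{0}\,\frac{n_{1}^{n_{1}}}{n_{1}!}\cdots\frac{n_{m}^{n_{m}}}{n_{m}!}.
\]
Taking the supremum over the unit ball gives $\left\Vert P_{i}\right\Vert\le(2/r)^{n_{1}+\cdots+n_{m}}k_{0}\prod_{j=1}^{m}\frac{n_{j}^{n_{j}}}{n_{j}!}$ for every $i\in I$, a bound independent of $i$, which is exactly (ii).

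The only delicate points are the verification that $E$ carries the max-norm structure so that Lemma \ref{genlem2.5} applies verbatim, and the observation—easy but essential—that the Baire argument delivers a single radius $r$ and a single constant $k_{0}$ serving all $P_{i}$ simultaneously. This uniformity in $i$ is precisely what upgrades the pointwise hypothesis into a finite bound on $\sup_{i}\left\Vert P_{i}\right\Vert$; beyond this, no obstacle other than routine bookkeeping is expected.
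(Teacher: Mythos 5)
Your proof is correct. The paper actually states this theorem without providing any proof, so there is no argument of the author's to compare against; but your Baire category argument is the natural one that the paper's own machinery is clearly set up to support. Your three steps are all sound: the sets $A_{k}$ are closed by continuity of each $P_{i}$ and cover $E=E_{1}\times\cdots\times E_{m}$ by hypothesis (i); completeness of the finite product under the max-norm legitimizes Baire's theorem, yielding a single ball $B(a;r)$ and single constant $k_{0}$ valid for all $i\in I$ simultaneously; and Lemma \ref{genlem2.5} (which is stated for exactly this max-norm product and an arbitrary center) transfers the uniform bound $k_{0}$ to the ball $B(0;r)$ with the $i$-independent constant $k_{0}\prod_{j=1}^{m}\frac{n_{j}^{n_{j}}}{n_{j}!}$, after which the rescaling identity $P_{i}(x)=\bigl(\tfrac{2}{r}\bigr)^{n_{1}+\cdots+n_{m}}P_{i}\bigl(\tfrac{r}{2}x_{1},\ldots,\tfrac{r}{2}x_{m}\bigr)$, borrowed from the implication $(viii)\Rightarrow(iv)$ of Theorem \ref{16}, gives $\sup_{i}\Vert P_{i}\Vert<\infty$. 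The easy direction via the Corollary's estimate is also handled properly, so the argument is complete.
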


\begin{corollary}[Banach-Steinhaus Theorem]
\label{bstmultpol}Let $\left( P_{j}\right) _{j=1}^{\infty }$ be a sequence
in $\mathcal{P}\left( ^{n_{1}}E_{1},\ldots ,^{n_{m}}E_{m};F\right) $ such
that $\left( P_{j}\left( x_{1},\ldots ,x_{m}\right) \right) _{j=1}^{\infty }$
is convergent in $F$ for all $\left( x_{1},\ldots ,x_{m}\right) \in
E_{1}\times \cdots \times E_{m}$. If we define%
\begin{equation*}
P:E_{1}\times \cdots \times E_{m}\rightarrow F
\end{equation*}%
by 
\begin{equation*}
P\left( x_{1},\ldots ,x_{m}\right) :=\underset{j\rightarrow \infty }{\lim }%
P_{j}\left( x_{1},\ldots ,x_{m}\right) \text{,}
\end{equation*}%
then $P\in \mathcal{P}\left( ^{n_{1}}E_{1},\ldots ,^{n_{m}}E_{m};F\right) $.
\end{corollary}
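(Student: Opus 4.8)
The plan is to split the claim into two parts: first, that the pointwise limit $P$ is algebraically an $(n_1,\ldots,n_m)$-homogeneous polynomial, and second, that it is continuous. The first part is exactly the content of the first bullet in the Remark preceding the statement, so I would simply invoke it: since each $P_j$ lies in $\mathcal{P}_a\left( ^{n_1}E_1,\ldots,^{n_m}E_m;F\right)$ and $P_j\to P$ pointwise, the limit $P$ again belongs to $\mathcal{P}_a\left( ^{n_1}E_1,\ldots,^{n_m}E_m;F\right)$. Were that bullet not available, one would fix all variables but the $i$-th and observe that a pointwise limit of $n_i$-homogeneous polynomials into a normed space is again $n_i$-homogeneous, since both homogeneity and the defining polynomial identity survive passage to a pointwise limit; doing this for each $i$ gives membership in $\mathcal{P}_a$.

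For continuity I would lean on the Uniform Boundedness Principle (Theorem \ref{plumultpol}). The key observation is that a convergent sequence in $F$ is bounded, so for every $x=\left( x_1,\ldots,x_m\right)$ the hypothesis yields $\sup_{j}\left\Vert P_j(x)\right\Vert<\infty$. This is precisely condition (i) of Theorem \ref{plumultpol}, whence its condition (ii) furnishes a finite constant $M:=\sup_j\left\Vert P_j\right\Vert<\infty$.

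With uniform boundedness in hand, I would invoke the Corollary following Theorem \ref{16} to write, for each $\left( x_1,\ldots,x_m\right)$,
\[
\left\Vert P_j\left( x_1,\ldots,x_m\right)\right\Vert\le \left\Vert P_j\right\Vert\,\left\Vert x_1\right\Vert^{n_1}\cdots\left\Vert x_m\right\Vert^{n_m}\le M\,\left\Vert x_1\right\Vert^{n_1}\cdots\left\Vert x_m\right\Vert^{n_m}.
\]
Letting $j\to\infty$ and using continuity of the norm on $F$ gives
\[
\left\Vert P\left( x_1,\ldots,x_m\right)\right\Vert\le M\,\left\Vert x_1\right\Vert^{n_1}\cdots\left\Vert x_m\right\Vert^{n_m}
\]
for all $\left( x_1,\ldots,x_m\right)\in E_1\times\cdots\times E_m$. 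This is exactly condition (iii) of Theorem \ref{16}, so $P$ is continuous; combined with the first paragraph this shows $P\in\mathcal{P}\left( ^{n_1}E_1,\ldots,^{n_m}E_m;F\right)$, as required.

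Regarding the main obstacle: there is no genuinely hard step once Theorem \ref{plumultpol} is in place, since the whole argument is a transcription of the classical linear Banach--Steinhaus scheme into the multipolynomial setting. The only point demanding a little care is the algebraic closure of $\mathcal{P}_a$ under pointwise limits, and that has already been recorded in the Remark; everything else is a clean application of uniform boundedness followed by passage to the limit in the homogeneity estimate of the Corollary.
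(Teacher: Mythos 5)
Your proof is correct and follows exactly the route the paper intends: the paper states this result as an unproved corollary of Theorem \ref{plumultpol}, with the algebraic closure of $\mathcal{P}_a\left( ^{n_{1}}E_{1},\ldots ,^{n_{m}}E_{m};F\right)$ under pointwise limits recorded in the preceding Remark precisely so that the corollary follows from pointwise boundedness of the convergent sequences, the Uniform Boundedness Principle, and the passage to the limit in the estimate of item (i) of the Corollary to Theorem \ref{16}, closing with the equivalence (iii) $\Leftrightarrow$ (i) of that theorem. Your argument supplies exactly these steps, so it matches the paper's implicit proof.
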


Under the same hypothesis of the BST presented above, we can also conclude
that $P_{j}$ converges to $P$ uniformly on compact subsets of $E_{1}\times
\cdots \times E_{m}$.

It is worth noting that the extreme cases $n_{1}=\ldots =n_{m}=1$ or $m=1$
recover the corresponding classical theorems for multilinear mappings (see,
for instance, \cite{sandberg, bern}) and homogeneous polynomials. The
special case $m=n_{1}=1$ recovers the classicals UBP and BST for linear
operators.

\section{The unified approach: multipolynomial ideals}

In this section, we present a more general and unified version for the
ideals of polynomials and multilinear operators.

\begin{definition}
Let $E_{1},\ldots ,E_{m},F$ be normed spaces. A multipolynomial \linebreak $%
P\in \mathcal{P}\left( ^{n_{1}}E_{1},\ldots ,^{n_{m}}E_{m};F\right) $ is
said to be of\ \textit{finite type} if there exists $k\in \mathbb{N}%
,~\varphi _{i}^{\left( j\right) }\in E_{j}^{\prime }$ and $b_{i}\in F$ with $%
i=1,\ldots ,k$ and $j=1,\ldots ,m$, such that%
\begin{equation*}
P\left( x_{1},\ldots ,x_{m}\right) =\overset{k}{\underset{i=1}{\sum }}%
\varphi _{i}^{\left( 1\right) }\left( x_{1}\right) ^{n_{1}}\cdots \varphi
_{i}^{\left( m\right) }\left( x_{m}\right) ^{n_{m}}b_{i}\text{.}
\end{equation*}%
We shall represent by $\mathfrak{F}\left( ^{n_{1}}E_{1},\ldots
,^{n_{m}}E_{m};F\right) $ the subspace of all finite type members of
\linebreak $\mathcal{P}\left( ^{n_{1}}E_{1},\ldots ,^{n_{m}}E_{m};F\right) $%
. The so-called \textit{approximable multipolynomials} are defined to be the
members of 
\begin{equation*}
\mathfrak{P}_{A}\left( ^{n_{1}}E_{1},\ldots ,^{n_{m}}E_{m};F\right) :=%
\overline{\mathfrak{F}\left( ^{n_{1}}E_{1},\ldots ,^{n_{m}}E_{m};F\right) }%
\text{.}
\end{equation*}
\end{definition}

\begin{definition}[Multipolynomial ideal]
\label{multipolyidealdef}For each $m\in \mathbb{N}$ and multi-degree $\left(
n_{1},\ldots ,n_{m}\right) \in \mathbb{N}^{m}$, let $\mathcal{P}_{m}^{\left(
n_{1},\ldots ,n_{m}\right) }$ denote the class of all continuous $\left(
n_{1},\ldots ,n_{m}\right) $-homogeneous polynomials between Banach spaces. A%
\textit{\ multipolynomial ideal} $\mathfrak{U}$ is a subclass of the class $%
\mathfrak{P}:\mathcal{=}\underset{m=1}{\overset{\infty }{\bigcup }}\left( 
\underset{\left( n_{1},\ldots ,n_{m}\right) \in \mathbb{N}^{m}}{\bigcup }%
\mathcal{P}_{m}^{\left( n_{1},\ldots ,n_{m}\right) }\right) $ of all
continuous multipolynomials between Banach spaces such that for all $m\in 
\mathbb{N}$, multi-degree $\left( n_{1},\ldots ,n_{m}\right) \in \mathbb{N}%
^{m}$ and all Banach spaces $E_{1},\ldots ,E_{m}$ and $F$, the components%
\begin{equation*}
\mathfrak{U}_{m}^{\left( n_{1},\ldots ,n_{m}\right) }\left(
^{n_{1}}E_{1},\ldots ,^{n_{m}}E_{m};F\right) :=\mathcal{P}\left(
^{n_{1}}E_{1},\ldots ,^{n_{m}}E_{m};F\right) \cap \mathfrak{U}
\end{equation*}%
satisfy:

\begin{description}
\item[(Ua)] $\mathfrak{U}_{m}^{\left( n_{1},\ldots ,n_{m}\right) }\left(
^{n_{1}}E_{1},\ldots ,^{n_{m}}E_{m};F\right) $ is a linear subspace of $%
\mathcal{P}\left( ^{n_{1}}E_{1},\ldots ,^{n_{m}}E_{m};F\right) $ which
contains the $\left( n_{1},\ldots ,n_{m}\right) $-homogeneous polynomials of
finite type;

\item[(Ub)] the ideal property: if $P\in \mathfrak{U}_{m}^{\left(
n_{1},\ldots ,n_{m}\right) }\left( ^{n_{1}}E_{1},\ldots
,^{n_{m}}E_{m};F\right) ,~u_{j}\in \mathcal{L}_{1}\left( G_{j};E_{j}\right) $
for $j=1,\ldots ,m$, and $t\in \mathcal{L}_{1}\left( F;H\right) $, then%
\begin{equation*}
t\circ P\circ \left( u_{1},\ldots ,u_{m}\right) \in \mathfrak{U}_{m}^{\left(
n_{1},\ldots ,n_{m}\right) }\left( ^{n_{1}}G_{1},\ldots
,^{n_{m}}G_{m};H\right) .
\end{equation*}
\end{description}

\noindent Moreover, $\mathfrak{U}$ is said to be a \textit{(quasi-) normed
multipolynomial ideal} if there exists a map $\left\Vert \cdot \right\Vert _{%
\mathfrak{U}}:\mathfrak{U}\rightarrow \lbrack 0,\infty )$ satisfying:

\begin{description}
\item[(U1)] $\left\Vert \cdot \right\Vert _{\mathfrak{U}}$ restricted to $%
\mathfrak{U}_{m}^{\left( n_{1},\ldots ,n_{m}\right) }\left(
^{n_{1}}E_{1},\ldots ,^{n_{m}}E_{m};F\right) $ is a (quasi-) norm, for all $%
m\in \mathbb{N}$, multi-degree $\left( n_{1},\ldots ,n_{m}\right) \in $ $%
\mathbb{N}^{m}$ and all Banach spaces $E_{1},\ldots ,E_{m}$ and $F$;

\item[(U2)] $\left\Vert id_{m}^{\left( n_{1},\ldots ,n_{m}\right) }:\mathbb{K%
}^{m}\rightarrow \mathbb{K}:id_{m}^{\left( n_{1},\ldots ,n_{m}\right)
}\left( \lambda _{1},\ldots ,\lambda _{m}\right) =\lambda _{1}^{n_{1}}\cdots
\lambda _{m}^{n_{m}}\right\Vert _{\mathfrak{U}}=1$, for all $m\in \mathbb{N}$
and $\left( n_{1},\ldots ,n_{m}\right) \in $ $\mathbb{N}^{m}$;

\item[(U3)] If $P\in \mathfrak{U}_{m}^{\left( n_{1},\ldots ,n_{m}\right)
}\left( ^{n_{1}}E_{1},\ldots ,^{n_{m}}E_{m};F\right) ,~u_{j}\in \mathcal{L}%
_{1}\left( G_{j};E_{j}\right) $ for $j=1,\ldots ,m$, and $t\in \mathcal{L}%
_{1}\left( F;H\right) $, then%
\begin{equation*}
\left\Vert t\circ P\circ \left( u_{1},\ldots ,u_{m}\right) \right\Vert _{%
\mathfrak{U}}\leq \left\Vert t\right\Vert \left\Vert P\right\Vert _{%
\mathfrak{U}}\left\Vert u_{1}\right\Vert ^{n_{1}}\cdots \left\Vert
u_{m}\right\Vert ^{n_{m}}\text{.}
\end{equation*}
\end{description}

\noindent When all the components $\mathfrak{U}_{m}^{\left( n_{1},\ldots
,n_{m}\right) }\left( ^{n_{1}}E_{1},\ldots ,^{n_{m}}E_{m};F\right) $ are
complete under the (quasi-) norm $\left\Vert \cdot \right\Vert _{\mathfrak{U}%
}$ above, then $\mathfrak{U}$ is called a \textit{(quasi-) Banach
multipolynomial ideal. }For a fixed multipolynomial ideal $\mathfrak{U}$, a
positive integer $m\in \mathbb{N}$ and a multi-degree $\left( n_{1},\ldots
,n_{m}\right) \in \mathbb{N}^{m}$, the class%
\begin{equation*}
\mathfrak{U}_{m}^{\left( n_{1},\ldots ,n_{m}\right) }:=\underset{%
E_{1},\ldots ,E_{m},F}{\bigcup }\mathfrak{U}_{m}^{\left( n_{1},\ldots
,n_{m}\right) }\left( ^{n_{1}}E_{1},\ldots ,^{n_{m}}E_{m};F\right)
\end{equation*}%
is called an \textit{ideal of }$\left( n_{1},\ldots ,n_{m}\right) $\textit{%
-homogeneous polynomials}.

\noindent A multipolynomial ideal $\mathfrak{U}$ is said to be \textit{closed%
} if all components\textit{\ }$\mathfrak{U}_{m}^{\left( n_{1},\ldots
,n_{m}\right) }\left( ^{n_{1}}E_{1},\ldots ,^{n_{m}}E_{m};F\right) $ are
closed subspace of $\left( \mathcal{P}\left( ^{n_{1}}E_{1},\ldots
,^{n_{m}}E_{m};F\right) ,\left\Vert \cdot \right\Vert \right) $.\bigskip
\end{definition}

Natural examples can be found.

\begin{definition}
Given a multipolynomial ideal $\mathfrak{U}$, we shall define%
\begin{equation*}
\overline{\mathfrak{U}}_{m}^{\left( n_{1},\ldots ,n_{m}\right) }\left(
^{n_{1}}E_{1},\ldots ,^{n_{m}}E_{m};F\right) :=\overline{\mathfrak{U}%
_{m}^{\left( n_{1},\ldots ,n_{m}\right) }\left( ^{n_{1}}E_{1},\ldots
,^{n_{m}}E_{m};F\right) }^{\left\Vert \cdot \right\Vert }\text{,}
\end{equation*}%
for all $m\in \mathbb{N}$, multi-degree $\left( n_{1},\ldots ,n_{m}\right)
\in $ $\mathbb{N}^{m}$ and all Banach spaces $E_{1},\ldots ,E_{m}$ and $F$.
\end{definition}

\begin{definition}
A multipolynomial $P\in \mathcal{P}\left( ^{n_{1}}E_{1},\ldots
,^{n_{m}}E_{m};F\right) $ is said to be \textit{compact} (resp. \textit{%
weakly compact}) if%
\begin{equation*}
P\left( B_{E_{1}}\times \cdots \times B_{E_{m}}\right)
\end{equation*}%
is relatively compact (resp. weakly relatively compact). In that case we
write $\mathfrak{K}\left( ^{n_{1}}E_{1},\ldots ,^{n_{m}}E_{m};F\right) $
(resp. $\mathfrak{W}\left( ^{n_{1}}E_{1},\ldots ,^{n_{m}}E_{m};F\right) $).
\end{definition}

\begin{definition}
Let $0<p,q_{1},\ldots ,q_{m}$ and let $E_{1},\ldots ,E_{m}$ and $F$ be
Banach spaces. A continuous $\left( n_{1},\ldots ,n_{m}\right) $-homogeneous
polynomial $P:E_{1}\times \cdots \times E_{m}\rightarrow F$ is said to be 
\textit{absolutely }$\left( p;q_{1},\ldots ,q_{m}\right) $-\textit{summing }%
if there exists $C\geq 0$ such that%
\begin{equation}
\left( \overset{\infty }{\underset{j=1}{\dsum }}\left\Vert P\left(
x_{j}^{\left( 1\right) },\ldots ,x_{j}^{\left( m\right) }\right) \right\Vert
^{p}\right) ^{\frac{1}{p}}\leq C\overset{m}{\underset{k=1}{\dprod }}%
\left\Vert \left( x_{j}^{\left( k\right) }\right) _{j=1}^{\infty
}\right\Vert _{w,q_{k}}^{n_{k}}  \label{matosabs}
\end{equation}%
for all $\left( x_{j}^{\left( k\right) }\right) _{j=1}^{\infty }\in \ell
_{q_{k}}^{w}\left( E_{k}\right) $, with$~k=1,\ldots ,m$. In this case we
write $\mathfrak{P}_{\text{\emph{as}}\left( p,q_{1},\ldots ,q_{m}\right)
}\left( ^{n_{1}}E_{1},\ldots ,^{n_{m}}E_{m};F\right) $.

In a quite more demanding way, $P$ is said to be \textit{fully} $\left(
p;q_{1},\ldots ,q_{m}\right) $-\textit{summing }if there exists\textit{\ }$%
C\geq 0$ such that\textit{\ }%
\begin{equation}
\left( \overset{\infty }{\underset{j_{1},\ldots ,j_{m}=1}{\dsum }}\left\Vert
P\left( x_{j_{1}}^{\left( 1\right) },\ldots ,x_{j_{m}}^{\left( m\right)
}\right) \right\Vert ^{p}\right) ^{\frac{1}{p}}\leq C\overset{m}{\underset{%
k=1}{\dprod }}\left\Vert \left( x_{j}^{\left( k\right) }\right)
_{j=1}^{\infty }\right\Vert _{w,q_{k}}^{n_{k}}  \label{matosfully}
\end{equation}%
for all $\left( x_{j}^{\left( k\right) }\right) _{j=1}^{\infty }\in \ell
_{q_{k}}^{w}\left( E_{k}\right) $, with$~k=1,\ldots ,m$. In this case we
write $\mathfrak{P}_{\text{\emph{fs}}\left( p,q_{1},\ldots ,q_{m}\right)
}\left( ^{n_{1}}E_{1},\ldots ,^{n_{m}}E_{m};F\right) $.
\end{definition}

The infimum of the $C>0$ for which inequality (\ref{matosabs}) (resp. (\ref%
{matosfully})) always holds defines a norm for the case $p\geq 1$ and a $p$%
-norm for $p<1$ on the space $\mathfrak{P}_{as\left( p,q_{1},\ldots
,q_{m}\right) }^{\left( n_{1},\ldots ,n_{m}\right) }\left(
^{n_{1}}E_{1},\ldots ,^{n_{m}}E_{m};F\right) $ (resp. $\mathfrak{P}_{\text{%
\emph{fs}}\left( p,q_{1},\ldots ,q_{m}\right) }\left( ^{n_{1}}E_{1},\ldots
,^{n_{m}}E_{m};F\right) $). In any case, we thus obtain complete topological
metrizable spaces.

\bigskip

We now give a list of several examples.

\begin{description}
\item[$\mathfrak{P}$] Ideal of continuous multipolynomials;

\item[$\mathfrak{F}$] Ideal of finite-type multipolynomials;

\item[$\overline{\mathfrak{U}}$] The closure of a multipolynomial ideal $%
\mathfrak{U}$;

\item[$\mathfrak{P}_{A}$] Ideal of approximable multipolynomials;

\item[$\mathfrak{K}$] Ideal of compact multipolynomials;

\item[$\mathfrak{W}$] Ideal of weakly compact multipolynomials;

\item[$\mathfrak{P}_{\text{\emph{as}}}$] Ideal of absolutely summing
multipolynomials;

\item[$\mathfrak{P}_{\text{\emph{fs}}}$] Ideal of fully summing
multipolynomials.
\end{description}

\bigskip

The multipolynomial ideals $\mathfrak{P}$, $\overline{\mathfrak{U}}$, $%
\mathfrak{P}_{A}$, $\mathfrak{K}$, $\mathfrak{W}$ are closed. If $\frac{1}{p}%
\leq \frac{n_{1}}{q_{1}}+\cdots +\frac{n_{m}}{q_{m}}$ then $\mathfrak{P}_{%
\emph{as}}$ and $\mathfrak{P}_{\emph{fs}}$ are Banach multipolynomial ideals
for $p\geq 1$ and $p$-Banach multipolynomial ideals for $p<1$. We have
null-spaces when $\frac{1}{p}>\frac{n_{1}}{q_{1}}+\cdots +\frac{n_{m}}{q_{m}}
$.

\begin{remark}
We shall recall that as particular cases or, more precisely, as extreme
cases ($n_{1}=\cdots =n_{m}=1$ or $m=1$), every ideal of multilinear
mappings (which includes the linear operator ideals) as well as every
polynomial ideal already established in the literature is a multipolynomial
ideal. They will be called \textit{extreme multipolynomial ideals}.
\end{remark}

\section{Multipolynomial hyper-ideals}

Recently in papers \cite{bo9} and \cite{bo10} the authors introduced and
developed the respective notions of hyper-ideals of multilinear operators
and homogeneous polynomials between Banach spaces. While the well studied
notions of ideals of multilinear operators (multi-ideals) as well as
polynomial ideals relies on the composition with linear operators (the
so-called ideal property), the notion proposed by the authors, called now as
hyper-ideal property, considers in \cite{bo9} the compositions with
multilinear operators and, under the polynomial viewpoint, considers in \cite%
{bo10} the compositions with homogeneous polynomials. Historically speaking,
the hyper-ideal property has already been studied individually for some
specific classes, see, e.g, \cite{dps,popa1,popa2}, and then \cite{bo9,bo10}
started the systematic study of the classes satisfying this stronger
condition. The aim of this section is to invoke the multipolynomials again,
as we have done before, to generalize and propose a unified approach for all
these isolated notions of hyper-ideals of operators (multilinear and
polynomial) which have been studied separately so far.

\begin{definition}[Hyper-ideal of multilinear operators \protect\cite{bo9}]
\label{ha}A \textit{hyper-ideal of multilinear operators} is a subclass $%
\mathcal{H}$ of the class of all continuous multilinear operators between
Banach spaces such that for all $n\in\mathbb{N}$ and all Banach spaces $%
E_{1},\ldots,E_{n}$ and $F$, the components%
\begin{equation*}
\mathcal{H}\left( E_{1},\ldots,E_{n};F\right) :=\mathcal{L}\left(
E_{1},\ldots,E_{n};F\right) \cap\mathcal{H}
\end{equation*}
satisfy:

\begin{description}
\item[(ha)] $\mathcal{H}\left( E_{1},\ldots,E_{n};F\right) $ is a linear
subspace of $\mathcal{L}\left( E_{1},\ldots,E_{n};F\right) $ which contains
the $n$-linear operators of finite type ;

\item[(hb)] The \textbf{hyper-ideal property}: given natural numbers $n$ and 
$1\leq m_{1}<\cdots <m_{n}$ and Banach spaces $G_{1},\ldots
,G_{m_{n}},E_{1},\ldots ,E_{n},F$ and $H$, if $B_{1}\in \mathcal{L}\left(
G_{1},\ldots ,G_{m_{1}};E_{1}\right) ,\ldots ,$\linebreak $B_{n}\in \mathcal{%
L}\left( G_{m_{n-1}+1},\ldots ,G_{m_{n}};E_{n}\right) ,A\in \mathcal{H}%
\left( E_{1},\ldots ,E_{n};F\right) $ and $t\in \mathcal{L}\left( F;H\right) 
$, then%
\begin{equation*}
t\circ A\circ \left( B_{1},\ldots ,B_{n}\right) \in \mathcal{H}\left(
G_{1},\ldots ,G_{m_{n}};H\right) \text{.}
\end{equation*}
\end{description}

\noindent Moreover, $\mathcal{H}$ is said to be a \textit{(quasi-) normed
hyper-ideal of multilinear operators} if there exists a map $\left\Vert
\cdot\right\Vert _{\mathcal{H}}:\mathcal{H}\rightarrow\lbrack0,\infty)$
satisfying:

\begin{description}
\item[(h1)] $\left\Vert \cdot \right\Vert _{\mathcal{H}}$ restricted to $%
\mathcal{H}\left( E_{1},\ldots ,E_{n};F\right) $ is a (quasi-) norm, for all 
$n\in \mathbb{N}$ and all Banach spaces $E_{1},\ldots ,E_{n}$ and $F$;

\item[(h2)] $\left\Vert I_{n}:\mathbb{K}^{n}\rightarrow\mathbb{K}%
,I_{n}\left( \lambda_{1},\ldots,\lambda_{n}\right) =\lambda_{1}\cdots\lambda
_{n}\right\Vert _{\mathcal{H}}=1$, for all $n\in\mathbb{N}$;

\item[(h3)] The \textbf{hyper-ideal inequality}: if $B_{1}\in \mathcal{L}%
\left( G_{1},\ldots ,G_{m_{1}};E_{1}\right) ,\ldots ,$\linebreak $B_{n}\in 
\mathcal{L}\left( G_{m_{n-1}+1},\ldots ,G_{m_{n}};E_{n}\right) ,A\in 
\mathcal{H}\left( E_{1},\ldots ,E_{n};F\right) $ and $t\in \mathcal{L}\left(
F;H\right) $, then%
\begin{equation*}
\left\Vert t\circ A\circ \left( B_{1},\ldots ,B_{n}\right) \right\Vert _{%
\mathcal{H}}\leq \left\Vert t\right\Vert \left\Vert A\right\Vert _{\mathcal{H%
}}\left\Vert B_{1}\right\Vert \cdots \left\Vert B_{n}\right\Vert \text{.}
\end{equation*}
\end{description}

\noindent When all the components $\mathcal{H}\left( E_{1},\ldots
,E_{n};F\right) $ are complete under the (quasi-) norm $\left\Vert
\cdot\right\Vert _{\mathcal{H}}$ above, then $\left( \mathcal{H},\left\Vert
\cdot\right\Vert _{\mathcal{H}}\right) $ is called a \textit{(quasi-) Banach
hyper-ideal of multilinear operators.}
\end{definition}

It is plain that every (normed, quasi-normed, Banach, quasi-Banach)
hyper-ideal is a (normed, quasi-normed, Banach, quasi-Banach) multi-ideal.

\begin{definition}[Polynomial hyper-ideal \protect\cite{bo10}]
\label{pa}A \textit{polynomial} \textit{hyper-ideal} is a subclass $\mathcal{%
Q}$ of the class of all continuous homogeneous polynomials between Banach
spaces such that for all $n\in\mathbb{N}$ and all Banach spaces $E$ and $F$,
the components%
\begin{equation*}
\mathcal{Q}\left( ^{n}E;F\right) :=\mathcal{P}\left( ^{n}E;F\right) \cap%
\mathcal{Q}
\end{equation*}
satisfy:

\begin{description}
\item[(pa)] $\mathcal{Q}\left( ^{n}E;F\right) $ is a linear subspace of $%
\mathcal{P}\left( ^{n}E;F\right) $ which contains the $n$-homogeneous
polynomials of finite type;

\item[(pb)] The \textbf{hyper-ideal property}: given $m,n\in\mathbb{N}$ and
Banach spaces $E,F,G$ and $H$, if $Q\in\mathcal{P}\left( ^{m}G;E\right) ,P\in%
\mathcal{Q}\left( ^{n}E;F\right) $ and $t\in\mathcal{L}\left( F;H\right) $,
then%
\begin{equation*}
t\circ P\circ Q\in\mathcal{Q}\left( ^{mn}G;H\right) \text{.}
\end{equation*}
\end{description}

\noindent If there exist a map $\left\Vert \cdot\right\Vert _{\mathcal{Q}}:%
\mathcal{Q}\rightarrow\lbrack0,\infty)$ and a sequence $\left( C_{j}\right)
_{j=1}^{\infty}$ of real numbers with $C_{j}\geq1$ for every $j\in\mathbb{N}$
and $C_{1}=1$, such that:

\begin{description}
\item[(p1)] $\left\Vert \cdot\right\Vert _{\mathcal{Q}}$ restricted to $%
\mathcal{Q}\left( ^{n}E;F\right) $ is a (quasi-) norm, for all $n\in\mathbb{N%
}$ and all Banach spaces $E$ and $F$;

\item[(p2)] $\left\Vert I_{n}:\mathbb{K}\rightarrow\mathbb{K},I_{n}\left(
\lambda\right) =\lambda^{n}\right\Vert _{\mathcal{Q}}=1$, for all $n\in%
\mathbb{N}$;

\item[(p3)] The \textbf{hyper-ideal inequality:} if $Q\in\mathcal{P}\left(
^{m}G;E\right) ,P\in\mathcal{Q}\left( ^{n}E;F\right) $ and $t\in \mathcal{L}%
\left( F;H\right) $, then%
\begin{equation*}
\left\Vert t\circ P\circ Q\right\Vert _{\mathcal{Q}}\leq\left( C_{m}\right)
^{n}\left\Vert t\right\Vert \left\Vert P\right\Vert _{\mathcal{Q}}\left\Vert
Q\right\Vert ^{n}\text{,}
\end{equation*}
\end{description}

\noindent then $\left( \mathcal{Q},\left\Vert \cdot\right\Vert _{\mathcal{Q}%
}\right) $ is called a \textit{(quasi-) normed polynomial }$\left(
C_{j}\right) _{j=1}^{\infty}$\textit{-hyper-ideal}. When all the components $%
\mathcal{Q}\left( ^{n}E;F\right) $ are complete under the (quasi-) norm $%
\left\Vert \cdot\right\Vert _{\mathcal{Q}}$ above, then $\left( \mathcal{Q}%
,\left\Vert \cdot\right\Vert _{\mathcal{Q}}\right) $ is called a \textit{%
(quasi-) Banach polynomial }$\left( C_{j}\right) _{j=1}^{\infty}$-\textit{%
hyper-ideal.}
\end{definition}

When $C_{j}=1$ for every $j\in \mathbb{N}$, we simply say that $\mathcal{Q}$
is a \textit{(quasi-) normed/(quasi-) Banach polynomial hyper-ideal. }When
the hyper-ideal property (and inequality) holds for every\textit{\ }$n\in 
\mathbb{N}$, but only for $m=1$, we say that $\mathcal{Q}$ is a \textit{%
(quasi-) normed/(quasi-) Banach polynomial ideal }(remember that $C_{1}=1$).

\begin{definition}[Polynomial two-sided ideal \protect\cite{bo10}]
\label{ts}A \textit{polynomial} \textit{two}-\textit{sided\ ideal} is a
subclass $\mathcal{Q}$ of the class of all continuous homogeneous
polynomials between Banach spaces such that for all $n\in \mathbb{N}$ and
all Banach spaces $E$ and $F$, the components%
\begin{equation*}
\mathcal{Q}\left( ^{n}E;F\right) :=\mathcal{P}\left( ^{n}E;F\right) \cap 
\mathcal{Q}
\end{equation*}%
satisfy:

\begin{description}
\item[(ts-a)] $\mathcal{Q}\left( ^{n}E;F\right) $ is a linear subspace of $%
\mathcal{P}\left( ^{n}E;F\right) $ which contains the $n$-homogeneous
polynomials of finite type;

\item[(ts-b)] The \textbf{two-sided ideal property:} given $m,n,r\in \mathbb{%
N}$ and Banach spaces $E,F,G$ and $H$, if $Q\in\mathcal{P}\left(
^{m}G;E\right) ,P\in\mathcal{Q}\left( ^{n}E;F\right) $ and $R\in \mathcal{P}%
\left( ^{r}F;H\right) $, then%
\begin{equation*}
R\circ P\circ Q\in\mathcal{Q}\left( ^{mnr}G;H\right) \text{.}
\end{equation*}
\end{description}

\noindent If there exist a map $\left\Vert \cdot\right\Vert _{\mathcal{Q}}:%
\mathcal{Q}\rightarrow\lbrack0,\infty)$ and a sequence $\left(
C_{j},K_{j}\right) _{j=1}^{\infty}$ of pairs of real numbers with $%
C_{j},K_{j}\geq1$ for every $j\in\mathbb{N}$ and $C_{1}=K_{1}=1$, such that:

\begin{description}
\item[(ts-1)] $\left\Vert \cdot\right\Vert _{\mathcal{Q}}$ restricted to $%
\mathcal{Q}\left( ^{n}E;F\right) $ is a (quasi-) norm, for all $n\in\mathbb{N%
}$ and all Banach spaces $E$ and $F$;

\item[(ts-2)] $\left\Vert I_{n}:\mathbb{K}\rightarrow\mathbb{K},I_{n}\left(
\lambda\right) =\lambda^{n}\right\Vert _{\mathcal{Q}}=1$, for all $n\in%
\mathbb{N}$;

\item[(ts-3)] The \textbf{two-sided ideal inequality}: if $Q\in\mathcal{P}%
\left( ^{m}G;E\right) ,P\in\mathcal{Q}\left( ^{n}E;F\right) $ and $R\in%
\mathcal{P}\left( ^{r}F;H\right) $, then%
\begin{equation*}
\left\Vert R\circ P\circ Q\right\Vert _{\mathcal{Q}}\leq K_{r}\left(
C_{m}\right) ^{rn}\left\Vert R\right\Vert \left\Vert P\right\Vert _{\mathcal{%
Q}}\left\Vert Q\right\Vert ^{rn}\text{,}
\end{equation*}
\end{description}

\noindent then $\left( \mathcal{Q},\left\Vert \cdot \right\Vert _{\mathcal{Q}%
}\right) $ is called a \textit{(quasi-) normed polynomial }$\left(
C_{j},K_{j}\right) _{j=1}^{\infty }$\textit{-two-sided ideal}. When all the
components $\mathcal{Q}\left( ^{n}E;F\right) $ are complete under the
(quasi-) norm $\left\Vert \cdot \right\Vert _{\mathcal{Q}}$ above, then $%
\left( \mathcal{Q},\left\Vert \cdot \right\Vert _{\mathcal{Q}}\right) $ is
called a \textit{(quasi-) Banach polynomial }$\left( C_{j},K_{j}\right)
_{j=1}^{\infty }$-\textit{two-sided\ ideal.}
\end{definition}

When $C_{j}=K_{j}=1$ for every $j\in\mathbb{N}$, we simply say that $%
\mathcal{Q}$ is a \textit{(quasi-) normed/(quasi-) Banach polynomial
two-sided ideal.}

\begin{remark}
The condition $C_{1}=K_{1}=1$ guarantees that every (normed, quasi-normed,
Banach, quasi-Banach) polynomial $\left( C_{j},K_{j}\right) _{j=1}^{\infty }$%
-two-sided ideal is a (normed, quasi-normed, Banach, quasi-Banach)
polynomial $\left( C_{j}\right) _{j=1}^{\infty }$-hyper-ideal; and that, as
we mentioned before, every (normed, quasi-normed, Banach, quasi-Banach)
polynomial $\left( C_{j}\right) _{j=1}^{\infty }$-hyper-ideal is a (normed,
quasi-normed, Banach, quasi-Banach) polynomial ideal.
\end{remark}

Next we extend the above notions to the multipolynomials.

\begin{definition}[Multipolynomial hyper-ideal]
A \textit{hyper-ideal of multipolynomials} (or \textit{multipolynomial
hyper-ideals}) is a subclass $\mathfrak{H}$ of the class of all continuous
multipolynomials between Banach spaces such that for all $n\in \mathbb{N}$,
multi-degree $\left( k_{1},\ldots ,k_{n}\right) \in \mathbb{N}^{n}$ and all
Banach spaces $E_{1},\ldots ,E_{n}$ and $F$, the components%
\begin{equation*}
\mathfrak{H}_{n}^{\left( k_{1},\ldots ,k_{n}\right) }\left(
^{k_{1}}E_{1},\ldots ,^{k_{n}}E_{n};F\right) :=\mathcal{P}\left(
^{k_{1}}E_{1},\ldots ,^{k_{n}}E_{n};F\right) \cap \mathfrak{H}
\end{equation*}%
satisfy:

\begin{description}
\item[(Ha)] $\mathfrak{H}_{n}^{\left( k_{1},\ldots ,k_{n}\right) }\left(
^{k_{1}}E_{1},\ldots ,^{k_{n}}E_{n};F\right) $ is a linear subspace of $%
\mathcal{P}\left( ^{k_{1}}E_{1},\ldots ,^{k_{n}}E_{n};F\right) $ which
contains the $\left( k_{1},\ldots ,k_{n}\right) $-homogeneous polynomials of
finite type ;

\item[(Hb)] The \textbf{hyper-ideal property}: given natural numbers $n,~~$ $%
1\leq m_{1}<\cdots <m_{n}$ and $r_{1},\ldots ,r_{m_{n}},k_{1},\ldots ,k_{n}$
and $r$ and Banach spaces $G_{1},\ldots ,G_{m_{n}},E_{1},\ldots ,E_{n},F$
and $H$, if \linebreak $Q_{1}\in \mathcal{P}\left( ^{r_{1}}G_{1},\ldots
,^{r_{m_{1}}}G_{m_{1}};E_{1}\right) ,\ldots ,Q_{n}\in \mathcal{P}\left(
^{r_{m_{n-1}+1}}G_{m_{n-1}+1},\ldots ,^{r_{m_{n}}}G_{m_{n}};E_{n}\right) ,$%
\linebreak $P\in \mathfrak{H}_{n}^{\left( k_{1},\ldots ,k_{n}\right) }\left(
^{k_{1}}E_{1},\ldots ,^{k_{n}}E_{n};F\right) $ and $R\in \mathcal{P}\left(
^{r}F;H\right) $, then%
\begin{equation*}
R\circ P\circ \left( Q_{1},\ldots ,Q_{n}\right) \in \mathfrak{H}%
_{m_{n}}^{\left( r_{1}k_{1}r,\ldots ,r_{m_{1}}k_{1}r,\ldots
,r_{m_{n}}k_{n}r\right) }\left( ^{r_{1}k_{1}r}G_{1},\ldots
,^{r_{m_{n}}k_{n}r}G_{m_{n}};H\right) .
\end{equation*}
\end{description}

\noindent If there exist a map $\left\Vert \cdot \right\Vert _{\mathfrak{H}}:%
\mathfrak{H}\rightarrow \lbrack 0,\infty )$ and a sequence $\left(
C_{j},K_{j}\right) _{j=1}^{\infty }$ of pairs of real numbers with $%
C_{j},K_{j}\geq 1$ for every $j\in \mathbb{N}$ and $C_{1}=K_{1}=1$, such
that:

\begin{description}
\item[(H1)] $\left\Vert \cdot \right\Vert _{\mathfrak{H}}$ restricted to $%
\mathfrak{H}_{n}^{\left( k_{1},\ldots ,k_{n}\right) }\left(
^{k_{1}}E_{1},\ldots ,^{k_{n}}E_{n};F\right) $ is a (quasi-) norm, for all $%
n\in \mathbb{N}$, multi-degree $\left( k_{1},\ldots ,k_{n}\right) \in 
\mathbb{N}^{n}$ and all Banach spaces $E_{1},\ldots ,E_{n}$ and $F$;

\item[(H2)] $\left\Vert I_{n}^{\left( k_{1},\ldots ,k_{n}\right) }:\mathbb{K}%
^{n}\rightarrow \mathbb{K},I_{n}^{\left( k_{1},\ldots ,k_{n}\right) }\left(
\lambda _{1},\ldots ,\lambda _{n}\right) =\lambda _{1}^{k_{1}}\cdots \lambda
_{n}^{k_{n}}\right\Vert _{\mathfrak{H}}=1$, for all $n\in \mathbb{N}$ and $%
\left( k_{1},\ldots ,k_{n}\right) \in \mathbb{N}^{n}$;

\item[(H3)] The \textbf{hyper-ideal inequality}: if $Q_{1}\in \mathcal{P}%
\left( ^{r_{1}}G_{1},\ldots ,^{r_{m_{1}}}G_{m_{1}};E_{1}\right) ,\ldots ,$%
\linebreak $Q_{n}\in \mathcal{P}\left( ^{r_{m_{n-1}+1}}G_{m_{n-1}+1},\ldots
,^{r_{m_{n}}}G_{m_{n}};E_{n}\right) ,P\in \mathfrak{H}_{n}^{\left(
k_{1},\ldots ,k_{n}\right) }\left( ^{k_{1}}E_{1},\ldots
,^{k_{n}}E_{n};F\right) $ and \linebreak $R\in \mathcal{P}\left(
^{r}F;H\right) $, then%
\begin{align*}
& \left\Vert R\circ P\circ \left( Q_{1},\ldots ,Q_{n}\right) \right\Vert _{%
\mathfrak{H}} \\
& \leq K_{r}\left( C_{r_{1}}\cdots C_{r_{m_{1}}}\right) ^{rk_{1}}\cdots
\left( C_{r_{m_{n-1}+1}}\cdots C_{r_{m_{n}}}\right) ^{rk_{n}}\left\Vert
R\right\Vert \left\Vert P\right\Vert _{\mathfrak{H}}^{r}\left\Vert
Q_{1}\right\Vert ^{rk_{1}}\cdots \left\Vert Q_{n}\right\Vert ^{rk_{n}}\text{,%
}
\end{align*}
\end{description}

\noindent then $\left( \mathfrak{H},\left\Vert \cdot \right\Vert _{\mathfrak{%
H}}\right) $ is called a \textit{(quasi-) normed multipolynomial }$\left(
C_{j},K_{j}\right) _{j=1}^{\infty }$-\textit{hyper-ideal. }When all the
components $\mathfrak{H}_{n}^{\left( k_{1},\ldots ,k_{n}\right) }\left(
^{k_{1}}E_{1},\ldots ,^{k_{n}}E_{n};F\right) $ are complete under
the\linebreak (quasi-) norm $\left\Vert \cdot \right\Vert _{\mathfrak{H}}$
above, then $\left( \mathfrak{H},\left\Vert \cdot \right\Vert _{\mathfrak{H}%
}\right) $ is called a \textit{(quasi-) Banach multipolynomial }$\left(
C_{j},K_{j}\right) _{j=1}^{\infty }$-\textit{hyper-ideal.}
\end{definition}

When $C_{j}=K_{j}=1$ for every $j\in \mathbb{N}$, we simply say that $%
\mathfrak{H}$ is a \textit{(quasi-) normed/(quasi-) Banach multipolynomial
hyper-ideal.}

\begin{remark}
Our multipolynomial hyper-ideals definition is more general and unifying in
the sense that it recovers the multilinear and polynomial cases. Indeed,
setting $n=1=m_{1}$ we get Definition \ref{ts} if $r>1$ and Definition \ref%
{pa} if $r=1$. In the other end, setting $k_{1}=\cdots =k_{n}=r_{1}=\cdots
=r_{m_{n}}=r=1$ we recover Definition \ref{ha}. Finally, it is plain that
every (normed, quasi-normed, Banach, quasi-Banach) multipolynomial
hyper-ideal is a (normed, quasi-normed, Banach, quasi-Banach)
multipolynomial ideal.
\end{remark}

\section{A Bohnenblust--Hille inequality for multipolynomials}

In this section we present a unified version to the Bohnenblust--Hille
inequalities \cite{bh} for homogeneous polynomials and for multilinear
forms. The theory of Bohnenblust--Hille inequalities has been exhaustively
investigated in recent years (see, for instance \cite{albu, bohr, ncm, c,
diniz, dn, dn2, pt, san}, and the references therein).

Let $\alpha =(\alpha _{j})_{j=1}^{\infty }$ be a sequence in $\mathbb{N}\cup
\{0\}$ and, as usual, define $|\alpha |=\sum\limits_{j=1}^{\infty }\alpha
_{j};$ in this case we also denote $\mathbf{x}^{\alpha
}:=\prod\nolimits_{j}x_{j}^{\alpha _{j}}.$ An $m$-homogeneous polynomial $%
P:c_{0}\rightarrow \mathbb{K}$ $\left( \mathbb{K=R}\text{ or }\mathbb{C}%
\right) $ is denoted by%
\begin{equation*}
P(x)=\sum_{|\alpha |=m}c_{\alpha }(P)\mathbf{x}^{\alpha }.
\end{equation*}%
We recall that the norm of $P$ is given by $\Vert P\Vert :=\sup_{\left\Vert
x\right\Vert \leq 1}|P(x)|$.\bigskip

The Bohnenblust--Hille inequality for homogeneous polynomials \cite{bh}
asserts that

\begin{theorem}[Polynomial Bohnenblust--Hille inequality]
\label{bhpol}Let $m$ be a positive fixed integer. The following assertions
are equivalent:

\begin{description}
\item[(i)] There exists a constant $C_{\mathbb{K},m}\geq 1$ such that%
\begin{equation*}
\left( \underset{|\alpha |=m}{\sum }\left\vert c_{\alpha }\left( P\right)
\right\vert ^{p}\right) ^{\frac{1}{p}}\leq C_{\mathbb{K},m}\left\Vert
P\right\Vert
\end{equation*}%
for all continuous $m$-homogeneous polynomial $P:c_{0}\rightarrow \mathbb{K}$%
;

\item[(ii)] 
\begin{equation*}
p\geq \frac{2m}{m+1}\text{.}
\end{equation*}
\end{description}
\end{theorem}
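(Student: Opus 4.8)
The plan is to establish the two implications separately, regarding $(ii)\Rightarrow(i)$ as the analytic core and $(i)\Rightarrow(ii)$ as the optimality of the exponent. For $(ii)\Rightarrow(i)$ I would first reduce to the critical value $p_{0}:=\frac{2m}{m+1}$: the map $p\mapsto\left(\sum_{|\alpha|=m}|c_{\alpha}(P)|^{p}\right)^{1/p}$ is nonincreasing in $p$, so once the estimate is secured for $p_{0}$ it holds for every $p\ge p_{0}$ with the same constant. It therefore suffices to produce $C_{\mathbb{K},m}$ for $p=p_{0}$, and the route I would follow is to pass from $P$ to the associated symmetric $m$-linear form $A$ (so that $P=\widehat{A}$), apply the classical multilinear Bohnenblust--Hille inequality \cite{bh} to $A$, and transfer the resulting coefficient bound back to $P$.

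Writing $P(x)=\sum_{|\alpha|=m}c_{\alpha}(P)\mathbf{x}^{\alpha}$ and grouping the monomials of $A$ by multi-index, one has $c_{\alpha}(P)=\binom{m}{\alpha}a_{\alpha}$, where $a_{\alpha}$ is the common value (by symmetry) of the $\binom{m}{\alpha}$ coefficients of $A$ attached to $\alpha$. Since $p_{0}\ge 1$ and $1\le\binom{m}{\alpha}\le m!$, the transfer reads

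\begin{align*}
\sum_{|\alpha|=m}|c_{\alpha}(P)|^{p_{0}}
&=\sum_{|\alpha|=m}\binom{m}{\alpha}^{p_{0}-1}\left(\binom{m}{\alpha}|a_{\alpha}|^{p_{0}}\right)\\
&\le (m!)^{p_{0}-1}\sum_{i_{1},\ldots,i_{m}}|a_{i_{1}\cdots i_{m}}|^{p_{0}}.
\end{align*}

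Bounding the last sum by a constant times $\|A\|^{p_{0}}$ through the multilinear inequality, and then controlling $\|A\|\le\frac{m^{m}}{m!}\|\widehat{A}\|=\frac{m^{m}}{m!}\|P\|$ by the polarization estimate~(\ref{estre}), yields $(i)$ with an explicit (though non-optimal) constant depending only on $m$.

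For $(i)\Rightarrow(ii)$ I would argue by contraposition using the Kahane--Salem--Zygmund inequality to exhibit extremal polynomials. For each $N$ this furnishes unimodular coefficients $(\varepsilon_{\alpha})_{|\alpha|=m}$ in $N$ variables such that $P_{N}=\sum_{|\alpha|=m}\varepsilon_{\alpha}\mathbf{x}^{\alpha}$ satisfies $\|P_{N}\|\le\kappa_{m}N^{(m+1)/2}$ for a constant $\kappa_{m}$ depending only on $m$, while the number of monomials is of order $N^{m}$, so that $\left(\sum_{|\alpha|=m}|c_{\alpha}(P_{N})|^{p}\right)^{1/p}$ is of order $N^{m/p}$. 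If $(i)$ held for some $p$, comparing these growth rates as $N\to\infty$ would force $\frac{m}{p}\le\frac{m+1}{2}$, that is $p\ge\frac{2m}{m+1}$; equivalently, any $p<\frac{2m}{m+1}$ makes the coefficient norm outgrow $\|P_{N}\|$ and contradicts $(i)$.

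The main obstacle is the multilinear Bohnenblust--Hille inequality driving $(ii)\Rightarrow(i)$: if it is not taken as known, it must be built from Khinchin's inequality (ideally with the Haagerup constants, to keep the dependence on $m$ under control) combined with a mixed-$\ell^{p}$ norm interpolation argument of Blei type. For the sharpness direction the technical heart is the Kahane--Salem--Zygmund estimate, whose proof rests on a random choice of signs together with a concentration inequality and an $\varepsilon$-net argument on the unit ball. I would cite both as known and concentrate the exposition on the polarization transfer, which is precisely where the tools developed earlier in the paper, especially~(\ref{estre}), come into play.
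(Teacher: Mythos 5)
Your proof is correct, but there is nothing in the paper to match it against line by line: Theorem \ref{bhpol} is stated there as the classical result of \cite{bh} and is never proved; it is instead used as a black box in the proof of the multipolynomial Bohnenblust--Hille inequality, the only result of that section the author actually proves. The meaningful comparison is therefore with that proof, and the two arguments run in opposite directions in the sufficiency part. You derive the polynomial inequality from the multilinear one (Theorem \ref{bhmultilin}): pass to the symmetric $m$-linear form $A$ with $P=\widehat{A}$, transfer coefficients via $c_{\alpha}(P)=\frac{m!}{\alpha !}\,a_{\alpha}$ using $1\leq \frac{m!}{\alpha !}\leq m!$ and $p_{0}=\frac{2m}{m+1}\geq 1$, and pay the polarization constant $\frac{m^{m}}{m!}$ from (\ref{estre}); this is the classical route, correct in every step (including the reduction to $p_{0}$ by monotonicity of $\ell^{p}$ norms), and it yields a finite but non-optimal constant of roughly $(m!)^{1-\frac{1}{p_{0}}}\,\frac{m^{m}}{m!}$ times the multilinear constant. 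The paper goes the other way: it takes Theorem \ref{bhpol} as known and deduces the more general multipolynomial statement from it by a disjoint-support substitution $Q(z)=P\left( (z_{j})_{j\in \mathbb{N}_{1}},\ldots ,(z_{j})_{j\in \mathbb{N}_{m}}\right) $, which loses no constant at all and recovers Theorem \ref{bhmultilin} as the extreme case $n_{1}=\cdots =n_{m}=1$. Your optimality argument is essentially the same as the paper's: both rest on Kahane--Salem--Zygmund extremal examples and a growth-rate comparison forcing $\frac{m}{p}\leq \frac{m+1}{2}$; the difference is that the paper uses the $M$-linear KSZ form of \cite[Lemma 6.1]{albu} and reassembles it into a multipolynomial by splitting $\mathbb{N}$, while you invoke the $m$-homogeneous polynomial version of KSZ directly, which is the natural choice for the statement at hand. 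What your approach buys is a self-contained proof of Theorem \ref{bhpol} modulo the two standard ingredients you cite; what the paper's arrangement buys is constant-free sufficiency and a single argument covering the polynomial, multilinear, and multipolynomial statements at once.
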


\bigskip

We also have the Bohnenblust--Hille inequality for multilinear forms \cite%
{bh}:

\begin{theorem}[Multilinear Bohnenblust--Hille inequality]
\label{bhmultilin}Let $m$ be a positive fixed integer. The following
assertions are equivalent:

\begin{description}
\item[(i)] There exists a constant $C_{\mathbb{K},m}\geq 1$ such that%
\begin{equation*}
\left( \sum_{i_{1},\ldots ,i_{m}=1}^{\infty }\left\vert T\left(
e_{i_{1}},\ldots ,e_{i_{m}}\right) \right\vert ^{p}\right) ^{\frac{1}{p}%
}\leq C_{\mathbb{K},m}\left\Vert T\right\Vert
\end{equation*}%
for all continuous $m$-linear forms $T:c_{0}\times \cdots \times
c_{0}\rightarrow \mathbb{K}$;

\item[(ii)] 
\begin{equation*}
p\geq \frac{2m}{m+1}\text{.}
\end{equation*}
\end{description}
\end{theorem}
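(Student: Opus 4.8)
The plan is to prove the two implications separately, reading $(ii)\Rightarrow(i)$ as the \emph{inequality} and $(i)\Rightarrow(ii)$ as the \emph{optimality of the exponent}. For $(ii)\Rightarrow(i)$, since $\|\cdot\|_{\ell_{p}}\leq\|\cdot\|_{\ell_{p_{0}}}$ whenever $p\geq p_{0}$, it suffices to establish the estimate at the endpoint $p_{0}=\tfrac{2m}{m+1}$. I would first reduce to finite sections: the norm of a continuous $m$-linear form on $c_{0}$ is the supremum of the norms of its restrictions to the sections $c_{0}^{N}=\ell_{\infty}^{N}$, so it is enough to prove the estimate on each section with a constant independent of $N$ and then let $N\to\infty$. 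This reduces everything to a finite array $a_{\mathbf i}:=T(e_{i_{1}},\ldots,e_{i_{m}})$, $\mathbf i\in\{1,\ldots,N\}^{m}$; write $\widehat{\mathbf i}_{k}=(i_{j})_{j\neq k}$ for the indices other than the $k$-th.

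Two ingredients then combine. First, for each distinguished coordinate $k$ I would apply the multiple Khinchin inequality in the remaining $m-1$ variables to bound the mixed norm
\[
\sum_{i_{k}=1}^{N}\Big(\sum_{\widehat{\mathbf i}_{k}}|a_{\mathbf i}|^{2}\Big)^{1/2}\leq C_{m}\,\|T\|,
\]
the point being that after applying Khinchin one recognizes the right-hand side as an average over sign vectors $\varepsilon^{(j)}\in\{\pm1\}^{N}$ of $\|T(\varepsilon^{(1)},\dots,\,\cdot\,,\dots,\varepsilon^{(m)})\|_{\ell_{1}}$, which equals the norm of that linear functional and is therefore $\leq\|T\|$, because each sign vector lies in the unit ball of $\ell_{\infty}^{N}$ and $(c_{0})'=\ell_{1}$. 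Second, I would invoke the mixed-norm inequality of Blei (a form of the generalized Hölder inequality),
\[
\Big(\sum_{\mathbf i}|a_{\mathbf i}|^{\frac{2m}{m+1}}\Big)^{\frac{m+1}{2m}}\leq\prod_{k=1}^{m}\Big(\sum_{i_{k}}\big(\sum_{\widehat{\mathbf i}_{k}}|a_{\mathbf i}|^{2}\big)^{1/2}\Big)^{1/m},
\]
which upgrades the $m$ partial estimates into a bound on the full symmetric $\ell_{p_{0}}$-norm. Taking the geometric mean of $m$ copies of the bound $C_{m}\|T\|$ yields the inequality with $C_{\mathbb{K},m}=C_{m}$, completing $(ii)\Rightarrow(i)$. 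Equivalently, this is the classical Bohnenblust--Hille induction on $m$, with base case $m=1$ the identity $\|T\|=\sum_{i}|T(e_{i})|$; the polynomial analogue, stated as Theorem~\ref{bhpol}, shares the same threshold.

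For $(i)\Rightarrow(ii)$ I would argue by contraposition using a Kahane--Salem--Zygmund construction. For each $N$ there exist signs $\varepsilon_{\mathbf i}=\pm1$ such that the $m$-linear form $T_{N}(x^{(1)},\ldots,x^{(m)})=\sum_{\mathbf i\in\{1,\ldots,N\}^{m}}\varepsilon_{\mathbf i}\,x^{(1)}_{i_{1}}\cdots x^{(m)}_{i_{m}}$ on $c_{0}$ satisfies $\|T_{N}\|\leq\kappa_{m}\,N^{\frac{m+1}{2}}(\log N)^{1/2}$. Since all coefficients are unimodular, the left-hand side of $(i)$ for $T_{N}$ equals $(N^{m})^{1/p}=N^{m/p}$, so $(i)$ forces $N^{m/p}\leq C_{\mathbb{K},m}\,\kappa_{m}\,N^{\frac{m+1}{2}}(\log N)^{1/2}$ for all $N$; letting $N\to\infty$ gives $\tfrac{m}{p}\leq\tfrac{m+1}{2}$, i.e. $p\geq\tfrac{2m}{m+1}$, the logarithmic factor being harmless since it is subpolynomial.

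The main obstacle lies in the sufficiency direction, and within it in the optimal-exponent step: reaching \emph{exactly} $p_{0}=\tfrac{2m}{m+1}$ (rather than a larger exponent) depends on the delicate interplay between Khinchin's inequality applied one coordinate at a time and the mixed-norm inequality that reassembles the $m$ partial estimates, since a single application of Hölder or Khinchin in all variables at once yields only a weaker exponent. On the optimality side, the non-trivial input is the Kahane--Salem--Zygmund bound $N^{(m+1)/2}$ on the norm of random unimodular multilinear forms, which is precisely what pins the threshold at $\tfrac{2m}{m+1}$; the remaining steps, including the reduction to finite sections and the passage $N\to\infty$, are routine bookkeeping.
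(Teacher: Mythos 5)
Your proof is correct, but it takes a genuinely different route from the paper. The paper never proves Theorem \ref{bhmultilin} directly: it quotes it as a classical result from \cite{bh}, and within the paper's own framework it is recovered as the extreme case $n_{1}=\cdots =n_{m}=1$ of the Multipolynomial Bohnenblust--Hille theorem. In the sufficiency direction, the paper's argument for that theorem disjointifies the index set: writing $\mathbb{N}=\mathbb{N}_{1}\cup \cdots \cup \mathbb{N}_{m}$ as a disjoint union of countable sets, it associates to the multilinear form $T$ the single $m$-homogeneous polynomial $Q(z)=T\left( (z_{j})_{j\in \mathbb{N}_{1}},\ldots ,(z_{j})_{j\in \mathbb{N}_{m}}\right) $ on $c_{0}$; since $\left\Vert Q\right\Vert \leq \left\Vert T\right\Vert $ (sup norm) and the coefficient families coincide, the polynomial inequality (Theorem \ref{bhpol}, itself taken as a black box) immediately yields the multilinear one. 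This is noteworthy because it inverts the usual reduction -- normally one deduces the polynomial case from the multilinear case via polarization, at the cost of a factor $m^{m}/m!$ -- and it does so with no loss in the constant; but it outsources the entire analytic core to the quoted polynomial theorem. You instead prove the hard direction from first principles: the multiple Khinchin inequality in $m-1$ variables, the duality $(c_{0})^{\prime }=\ell _{1}$ to bound each mixed $\ell _{1}(\ell _{2})$ norm by $C_{m}\left\Vert T\right\Vert $, and Blei's mixed-norm inequality to assemble the full $\ell _{2m/(m+1)}$ bound; this is the standard modern proof, self-contained and with explicit constants. Your necessity argument coincides with the paper's: both use Kahane--Salem--Zygmund unimodular forms with $\left\Vert T_{N}\right\Vert \leq \kappa _{m}N^{(m+1)/2}$ (your extra $(\log N)^{1/2}$ factor is harmless, as you note, since it is subpolynomial) and compare growth rates as $N\rightarrow \infty $.
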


\bigskip

Our next step is to unify these results above. We shall observe that a $%
\left( n_{1},\ldots ,n_{m}\right) $-homogeneous polynomial $P:c_{0}\times
\cdots \times c_{0}\rightarrow \mathbb{K}$ can be allways written as%
\begin{equation*}
P\left( x^{\left( 1\right) },\ldots ,x^{\left( m\right) }\right) =\underset{%
\left\vert \alpha ^{\left( 1\right) }\right\vert =n_{1},\ldots ,\left\vert
\alpha ^{\left( m\right) }\right\vert =n_{m}}{\sum }c_{\alpha ^{\left(
1\right) }\ldots \alpha ^{\left( m\right) }}\left( P\right) \left( x^{\left(
1\right) }\right) ^{\alpha ^{\left( 1\right) }}\cdots \left( x^{\left(
m\right) }\right) ^{\alpha ^{\left( m\right) }}
\end{equation*}%
where, as we have previously defined, $\left( \alpha _{j}^{\left( i\right)
}\right) _{j=1}^{\infty }$ is a sequence in $\mathbb{N\cup }\left\{
0\right\} $, $\left\vert \alpha ^{\left( i\right) }\right\vert
=\sum\limits_{j=1}^{\infty }\alpha _{j}^{\left( i\right) }$ and $\left(
x^{\left( i\right) }\right) ^{\alpha ^{\left( i\right)
}}=\prod\nolimits_{j}\left( x_{j}^{\left( i\right) }\right) ^{\alpha
_{j}^{\left( i\right) }}$, for $i=1,\ldots ,m$.

Next we invoke the notion of multipolynomials to unify Theorems \ref{bhpol}
and \ref{bhmultilin}. In fact, these are respectively the particular cases $%
m=1$ and $n_{1}=\cdots =n_{m}=1$ of the following theorem:

\begin{theorem}[Multipolynomial Bohnenblust--Hille inequality]
Let $n_{1},\ldots ,n_{m}$ and $m$ be fixed positive integers. The following
assertions are equivalent:

\begin{description}
\item[(i)] There is a constant $C_{n_{1},\ldots ,n_{m}}\geq 1$ such that%
\begin{equation*}
\left( \underset{\left\vert \alpha ^{\left( 1\right) }\right\vert
=n_{1},\ldots ,\left\vert \alpha ^{\left( m\right) }\right\vert =n_{m}}{\sum 
}\left\vert c_{\alpha ^{\left( 1\right) }\ldots \alpha ^{\left( m\right)
}}\left( P\right) \right\vert ^{p}\right) ^{\frac{1}{p}}\leq C_{n_{1},\ldots
,n_{m}}\left\Vert P\right\Vert
\end{equation*}%
for all $\left( n_{1},\ldots ,n_{m}\right) $-homogeneous polynomial $%
P:c_{0}\times \cdots \times c_{0}\rightarrow \mathbb{K}$.

\item[(ii)] 
\begin{equation*}
p\geq \frac{2\left( \overset{m}{\underset{j=1}{\sum }}n_{j}\right) }{\left( 
\overset{m}{\underset{j=1}{\sum }}n_{j}\right) +1}\text{.}
\end{equation*}
\end{description}
\end{theorem}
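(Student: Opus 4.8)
The plan is to prove the two implications separately, writing $N:=\sum_{j=1}^{m}n_{j}$ for the total degree and reducing everything to the already-established multilinear inequality (Theorem~\ref{bhmultilin}) at degree $N$, whose critical exponent is exactly $p_{0}:=\frac{2N}{N+1}$.

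For the sufficiency $(ii)\Rightarrow(i)$ I would first reduce to the critical exponent $p_{0}$, since for $p\ge p_{0}$ the $\ell_{p}$-norm of the coefficient sequence is dominated by its $\ell_{p_{0}}$-norm. Next, polarizing $P$ separately in each of its $m$ variables produces an $N$-linear form $A:c_{0}\times\cdots\times c_{0}\to\mathbb{K}$ (with $N$ slots) that is symmetric within each of the $m$ blocks of sizes $n_{1},\ldots,n_{m}$ and satisfies $P(x^{(1)},\ldots,x^{(m)})=A\big((x^{(1)})^{n_{1}},\ldots,(x^{(m)})^{n_{m}}\big)$. Expanding $A$ on the canonical basis and collecting monomials gives $c_{\alpha^{(1)}\cdots\alpha^{(m)}}(P)=\prod_{k=1}^{m}\binom{n_{k}}{\alpha^{(k)}}\,A(\mathbf{e}_{\alpha})$ for a representative tuple $\mathbf{e}_{\alpha}$, while block-symmetry yields $\sum_{i_{1},\ldots,i_{N}}|A(e_{i_{1}},\ldots,e_{i_{N}})|^{p_{0}}=\sum_{\alpha}\big(\prod_{k}\binom{n_{k}}{\alpha^{(k)}}\big)|A(\mathbf{e}_{\alpha})|^{p_{0}}$. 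Combining these and using $\binom{n_{k}}{\alpha^{(k)}}\le n_{k}!$ together with $p_{0}-1\ge 0$ I obtain
\begin{equation*}
\sum_{\alpha}|c_{\alpha^{(1)}\cdots\alpha^{(m)}}(P)|^{p_{0}}\le\Big(\prod_{k=1}^{m}n_{k}!\Big)^{p_{0}-1}\sum_{i_{1},\ldots,i_{N}}|A(e_{i_{1}},\ldots,e_{i_{N}})|^{p_{0}}.
\end{equation*}
Applying Theorem~\ref{bhmultilin} at degree $N$ to $A$, and then estimating $\|A\|\le\prod_{k=1}^{m}\frac{n_{k}^{n_{k}}}{n_{k}!}\,\|P\|$ by iterating the polarization bound (\ref{estre}) once in each variable, closes the argument with an explicit constant $C_{n_{1},\ldots,n_{m}}$.

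For the necessity $(i)\Rightarrow(ii)$ I would transfer the known optimality of the multilinear inequality at degree $N$. Given an $N$-linear form $T$, the substitution $P(x^{(1)},\ldots,x^{(m)}):=T\big(x^{(1)},\ldots,x^{(1)},\ldots,x^{(m)},\ldots,x^{(m)}\big)$, repeating $x^{(k)}$ exactly $n_{k}$ times, produces a genuine $(n_{1},\ldots,n_{m})$-homogeneous polynomial with $\|P\|\le\|T\|$ by the multilinear estimate (\ref{g66}). Taking $T$ block-symmetric and extremal for Theorem~\ref{bhmultilin} (say a Kahane--Salem--Zygmund type example) supported on tuples whose indices are distinct within each block, the coefficients $c_{\alpha^{(1)}\cdots\alpha^{(m)}}(P)$ equal exactly $\prod_{k}n_{k}!$ times the corresponding values $T(e_{i_{1}},\ldots,e_{i_{N}})$. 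Feeding such $P$ into the assumed inequality $(i)$ reproduces the sharp degree-$N$ multilinear estimate up to dimension-independent constants; were $p<p_{0}$ this would contradict the optimality asserted in Theorem~\ref{bhmultilin}, forcing $p\ge p_{0}$.

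The routine parts are the bookkeeping of the multinomial coefficients and the iterated polarization estimate. \textbf{The main obstacle} is the necessity direction: one must transfer the extremal degree-$N$ examples into the block structure $(n_{1},\ldots,n_{m})$ \emph{without} losing the size of the coefficient sum to the symmetrization collapse that occurs when several repeated copies of a variable read the same coordinate. Restricting to block-symmetric forms supported on block-distinct indices is precisely what makes the passage $T\leftrightarrow P$ lossless and lets the lower bound survive; confirming that this restricted family still exhibits the sharp growth rate is the delicate point of the whole argument.
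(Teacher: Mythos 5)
Your sufficiency direction is essentially sound, but your necessity direction has a genuine gap, and it is exactly the one you flag at the end of your proposal. Writing $N:=n_{1}+\cdots +n_{m}$ and $p_{0}:=\frac{2N}{N+1}$, your diagonal substitution $P(x^{(1)},\ldots ,x^{(m)}):=T(x^{(1)},\ldots ,x^{(1)},\ldots ,x^{(m)},\ldots ,x^{(m)})$ converts hypothesis (i) into a Bohnenblust--Hille-type estimate only for the \emph{restricted} class of block-symmetric $N$-linear forms supported on block-distinct tuples. For $p<p_{0}$ this does not contradict Theorem \ref{bhmultilin}: the optimality there is witnessed by the Kahane--Salem--Zygmund forms, which are neither block-symmetric nor supported off the within-block diagonals, so failure of the inequality over the full class says nothing about your restricted class. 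To close the argument you would need to exhibit a block-symmetric, block-distinct-supported family $T_{r}$ with coefficients of modulus one on its support (so that $c_{\alpha }(P_{r})=\pm \prod_{k}n_{k}!$) and with $\left\Vert T_{r}\right\Vert =O\bigl(r^{(N+1)/2}\bigr)$, e.g.\ by symmetrizing random signs within blocks and re-running a Salem--Zygmund estimate. You explicitly defer this as ``the delicate point,'' so as written the proof of $(i)\Rightarrow (ii)$ is incomplete.

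The paper avoids this obstacle entirely by never placing two copies of the same variable in two slots of $T_{r}$: it partitions $\mathbb{N}$ into disjoint infinite subsets and feeds the \emph{disjoint coordinate restrictions} of $x^{(k)}$ into the $n_{k}$ slots assigned to the $k$-th block. Since distinct slots read distinct coordinates, distinct tuples produce distinct monomials, there is no symmetrization collapse and no multinomial factor: the coefficients of $P_{r}$ are exactly the $\pm 1$ coefficients of the standard (non-symmetric) KSZ form, $\left\Vert P_{r}\right\Vert \leq \left\Vert T_{r}\right\Vert \leq K_{N}r^{\frac{N+1}{2}}$, and (i) immediately forces $\frac{N}{p}\leq \frac{N+1}{2}$ with no new construction. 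Your sufficiency route (block polarization, multinomial bookkeeping, Theorem \ref{bhmultilin}, and the iterated estimate (\ref{estre})) does work, but it is genuinely different from the paper's and costs a constant of order $\bigl(\prod_{k}n_{k}!\bigr)^{\frac{p_{0}-1}{p_{0}}}\prod_{k}\frac{n_{k}^{n_{k}}}{n_{k}!}$ times the multilinear constant; the paper instead merges the $m$ variables into a single $c_{0}$ variable on disjoint coordinate blocks and applies the polynomial inequality (Theorem \ref{bhpol}), which is lossless: $\left\Vert Q\right\Vert \leq \left\Vert P\right\Vert $ and the coefficient sums coincide exactly. Adopting that same splitting trick in the necessity direction is the simplest way to repair your argument.
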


\begin{proof}
$\left( ii\right) \Rightarrow \left( i\right) $: It suffices to prove the
assertion for 
\begin{equation*}
p_{0}=\frac{2\left( \overset{m}{\underset{j=1}{\sum }}n_{j}\right) }{\left( 
\overset{m}{\underset{j=1}{\sum }}n_{j}\right) +1}\text{.}
\end{equation*}%
Let $Q:c_{0}\rightarrow \mathbb{K}$ be the $\left( n_{1}+\cdots
+n_{m}\right) $-homogeneous polynomial given by%
\begin{equation*}
Q\left( z\right) :=P\left( \left( z_{j}\right) _{j\in \mathbb{N}_{1}},\ldots
,\left( z_{j}\right) _{j\in \mathbb{N}_{m}}\right) \text{,}
\end{equation*}%
where $\mathbb{N=N}_{1}\bigcup \cdots \bigcup \mathbb{N}_{m}$ is a disjoint
union with $card\left( \mathbb{N}_{j}\right) =card\left( \mathbb{N}\right) $%
, for $j=1,\dots ,m$. Note that since we are dealing with the $\sup $ norm
we have%
\begin{equation*}
\left\Vert Q\right\Vert \leq \left\Vert P\right\Vert
\end{equation*}%
and%
\begin{equation*}
\underset{\left\vert \beta \right\vert =n_{1}+\cdots +n_{m}}{\sum }%
\left\vert c_{\beta }\left( Q\right) \right\vert ^{p}=\underset{\left\vert
\alpha ^{\left( 1\right) }\right\vert =n_{1},\ldots ,\left\vert \alpha
^{\left( m\right) }\right\vert =n_{m}}{\sum }\left\vert c_{\alpha ^{\left(
1\right) }\ldots \alpha ^{\left( m\right) }}\left( P\right) \right\vert ^{p}%
\text{,}
\end{equation*}%
for all $p$. By the Polynomial Bohnenblust-Hille Inequality there exists a
constant $C_{n_{1}+\cdots +n_{m}}\geq 1$ such that%
\begin{eqnarray*}
\left( \underset{\left\vert \alpha ^{\left( 1\right) }\right\vert
=n_{1},\ldots ,\left\vert \alpha ^{\left( m\right) }\right\vert =n_{m}}{\sum 
}\left\vert c_{\alpha ^{\left( 1\right) }\ldots \alpha ^{\left( m\right)
}}\left( P\right) \right\vert ^{p_{0}}\right) ^{\frac{1}{p_{0}}} &=&\left( 
\underset{\left\vert \beta \right\vert =n_{1}+\cdots n_{m}}{\sum }\left\vert
c_{\beta }\left( Q\right) \right\vert ^{p_{0}}\right) ^{\frac{1}{p_{0}}} \\
&\leq &C_{n_{1}+\cdots +n_{m}}\left\Vert Q\right\Vert \\
&\leq &C_{n_{1}+\cdots +n_{m}}\left\Vert P\right\Vert \text{.}
\end{eqnarray*}

$\left( i\right) \Rightarrow \left( ii\right) $: Let 
\begin{eqnarray*}
T_{r} &:&c_{0}\times \cdots \times c_{0}\rightarrow \mathbb{K} \\
T_{r}\left( x^{\left( 1\right) },\ldots ,x^{\left( M\right) }\right)
&=&\sum_{i_{1},...,i_{M}=1}^{r}\pm x_{i_{1}}^{\left( 1\right) }\cdots
x_{i_{M}}^{\left( M\right) }
\end{eqnarray*}%
be the $M$-linear operator given by the Kahane-Salem-Zygmund inequality
(see, \cite[Lemma 6.1]{albu}) with%
\begin{equation*}
M=\overset{m}{\underset{j=1}{\sum }}n_{j}\text{.}
\end{equation*}%
Define $P_{r}:\overset{m}{\overbrace{c_{0}\times \cdots \times c_{0}}}%
\rightarrow \mathbb{K}$ by%
\begin{equation*}
P_{r}\left( x^{\left( 1\right) },\ldots ,x^{\left( m\right) }\right)
=T_{r}\left( \underset{n_{1}}{\underbrace{\left( x_{j}^{\left( 1\right)
}\right) _{j\in \mathbb{N}_{1}^{\left( 1\right) }},\ldots ,\left(
x_{j}^{\left( 1\right) }\right) _{j\in \mathbb{N}_{n_{1}}^{\left( 1\right) }}%
}},\ldots ,\underset{n_{m}}{\underbrace{\left( x_{j}^{\left( m\right)
}\right) _{j\in \mathbb{N}_{1}^{\left( m\right) }},\ldots ,\left(
x_{j}^{\left( m\right) }\right) _{j\in \mathbb{N}_{n_{m}}^{\left( m\right) }}%
}}\right) \text{,}
\end{equation*}%
where%
\begin{eqnarray*}
\mathbb{N} &\mathbb{=}&\mathbb{N}_{1}^{\left( 1\right) }\bigcup \cdots
\bigcup \mathbb{N}_{n_{1}}^{\left( 1\right) } \\
&&\vdots \\
\mathbb{N} &\mathbb{=}&\mathbb{N}_{1}^{\left( m\right) }\bigcup \cdots
\bigcup \mathbb{N}_{n_{m}}^{\left( m\right) }
\end{eqnarray*}%
are disjoint unions with $card\left( \mathbb{N}_{k}^{\left( i\right)
}\right) =card\left( \mathbb{N}\right) $, for $i=1,\dots ,m$ and $k=1,\ldots
,n_{i}$. Note that $P_{r}$ is an $\left( n_{1},\ldots ,n_{m}\right) $%
-homogeneous polynomial and $\left\Vert P_{r}\right\Vert \leq \left\Vert
T_{r}\right\Vert $.\ Moreover,%
\begin{eqnarray*}
\underset{\left\vert \alpha \right\vert =n_{1}+\cdots +n_{m}}{\sum }%
\left\vert c_{\alpha }\left( P_{r}\right) \right\vert ^{p}
&=&\sum_{i_{1},\ldots ,i_{M}=1}^{\infty }\left\vert T_{r}\left(
e_{i_{1}},\ldots ,e_{i_{M}}\right) \right\vert ^{p} \\
&=&r^{M}
\end{eqnarray*}%
for all $p$. Since%
\begin{equation*}
\left\Vert T_{r}\right\Vert \leq K_{M}r^{\frac{M+1}{2}}\text{,}
\end{equation*}%
by (i) we conclude that%
\begin{equation*}
\left( r^{M}\right) ^{\frac{1}{p}}\leq C_{n_{1},\ldots ,n_{m}}K_{M}r^{\frac{%
M+1}{2}}
\end{equation*}%
for all positive integers $r$. Thus%
\begin{equation*}
\frac{M}{p}\leq \frac{M+1}{2}
\end{equation*}%
and the proof is done.
\end{proof}

\bigskip

\end{document}